\newtheorem{thm}{Theorem}
\newtheorem{lemma}{Lemma}
\newtheorem{prop}{Proposition}
\newtheorem{cor}{Corollary}
\newtheorem{claim}{Claim}
\newtheorem*{thmA}{Theorem A}
\newtheorem*{thmB}{Theorem B}
\newtheorem*{thmC}{Theorem C}
\newcommand{\R}{\ensuremath{\mathbb{R}}}
\newcommand{\N}{\ensuremath{\mathbb{N}}}
\newcommand{\F}{\ensuremath{\mathbb{F}}}
\newcommand{\EE}{\ensuremath{\mathbb{E}}}
\newcommand{\ba}{\ensuremath{\mathbf{a}}}
\newcommand{\bF}{\ensuremath{\bar{\mathbb{F}}}}
\newcommand{\1}{\ensuremath{\mathbf{1}}}
\newcommand{\subs}{\ensuremath{\subseteq}}
\newcommand{\eps}{\ensuremath{\varepsilon}}
\newcommand{\la}{\ensuremath{\lambda}}
\newcommand{\al}{\ensuremath{\alpha}}
\newcommand{\be}{\ensuremath{\beta}}
\newcommand{\ga}{\ensuremath{\gamma}}
\newcommand{\de}{\ensuremath{\delta}}
\newcommand{\eq}{\begin{equation}
\newcommand{\ee}{\end{equation}}}
\numberwithin{equation}{section}
\begin{document}
\title{On restricted arithmetic progressions\\ over finite fields}
\author{Brian Cook\quad\quad\quad\'Akos Magyar}
\thanks{The second author was supported by NSERC Grant 22R44824.}

\address{Department of Mathematics, The University of British Columbia, Vancouver, BC, V6T1Z2, Canada}
\email{bcook@math.ubc.ca}
\address{Department of Mathematics, University of British Columbia, Vancouver, B.C. V6T 1Z2, Canada}
\email{magyar@math.ubc.ca}

\begin{abstract} Let $A$ be a subset of $\F_p^n$, the $n$-dimensional linear space over the prime field $\F_p$ of size at least $\de N$ $(N=p^n)$, and let $S_v=P^{-1}(v)$ be the level set of a homogeneous polynomial map $P:\F_p^n\to\F_p^R$ of degree $d$, and $v\in\F_p^R$. We show, that under appropriate conditions, the set $A$ contains at least $c\, N|S|$ arithmetic progressions of length $l\leq d$ with common difference in $S_v$, where c is a positive constant depending on $\de$, $l$ and $P$. We also show that the conditions are generic for a class of sparse algebraic sets of density $\approx N^{-\eps}$.
\end{abstract}

\maketitle

\section{Introduction.}
\subsection{Background.}

A famous result of Szemer\'edi \cite{SZ} states that a set $A$ of positive upper density of the integers contains arbitrarily long arithmetic progressions $x,x+d,\ldots,x+ld$. There has been many generalizations and extensions, a natural question one may ask is whether one may add restrictions on the common difference $d$. A typical example of this type is the well-known theorem of S\'ark\"ozy \cite{S}, saying that $A$ contains two elements whose difference is a square. More recently Green has shown \cite{G} that $A$ contains a 3-term arithmetic progression, whose common difference is a sum of two squares. Far reaching results of this type for longer progressions have been obtained recently by Green and Tao \cite{GT} and by Wooley and Ziegler \cite{WZ} where the gap is of the form $p-1$ and $P(p-1)$, $p$ being a prime and $P$ an integral polynomial such that $P(0)=0$.

The aim of this note is to provide a simple extension of this type in the finite field settings, where $A\subs\F_p^n$ is a set of density $\de>0$ and one is counting arithmetic progressions in $A$ with gaps in algebraic sets $S$ given as level sets of a family of homogeneous polynomials.

\subsection{Main results.} Let $\F_p^n$ be the $n$-dimensional linear space above the prime field $\F_p$, and for a fixed $\de>0$ let $A\subs\F_p^n$ be a set of at least $|A|\geq\de p^n$ elements. The finite field version of Szemer\'edi's theorem states that such sets will contain genuine arithmetic progressions of length $l\leq p$ as long as $n$ is large enough. Note that the condition $l\leq p$ is natural as it ensures that progressions in the form $\{x,x+y,\ldots,x+(l-1)y\}$ consist of distinct points for $y\neq 0$. We will need the following quantitative version

\begin{thmA} Let $\de>0$ and $l\in\N$. For a function $f:\F_p^n\to [0,1]$ satisfying $\EE(f(x):\,x\in\F_p^n)\geq\de$ one has that
\eq \EE(f(x)f(x+y)\ldots f(x+(l-1)y):\ x,y\in\F_p^n)\geq c(\de,l,p),\ee
where $c(\de,l,p)$ is a positive constant depending only on $\de$, $l$ and $p$.
\end{thmA}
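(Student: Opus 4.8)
The plan is to derive this supersaturation estimate from the \emph{qualitative} finite field Szemer\'edi theorem by a Varnavides-type argument: first reduce from $[0,1]$-valued functions to sets, then prove the estimate in a dimension $m=m(\de,l,p)$ which we are free to fix (by pigeonholing the qualitative theorem over the finitely many dense subsets of $\F_p^m$), and finally transfer it to the arbitrary ambient dimension $n$ by random affine embeddings $\F_p^m\to\F_p^n$. Write $\La_l(f):=\EE\big(f(x)f(x+y)\cdots f(x+(l-1)y):\ x,y\in\F_p^n\big)$ for the quantity to be bounded below. The single nonelementary ingredient is the qualitative statement: for every $\eta>0$ there is $m_0=m_0(\eta,l,p)$ so that every $B\subs\F_p^m$ with $|B|\geq\eta\,p^m$ and $m\geq m_0$ contains a configuration $x,x+y,\ldots,x+(l-1)y$ with $y\neq0$. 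This is the finite field analogue of Szemer\'edi's theorem (for $l\leq3$ it is the Roth--Meshulam theorem; in general it is a consequence of the density Hales--Jewett theorem of Furstenberg and Katznelson). Note that $p$ is fixed here, so one cannot obtain it by restricting to a one-dimensional affine line $\cong\Z/p\Z$ and applying Szemer\'edi's theorem in $\Z$; genuinely higher-dimensional information is needed.

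\emph{Reduction to sets, and a bounded dimension.} If $\EE(f)\geq\de$, set $B:=\{x:\ f(x)\geq\de/4\}$; since $f\leq1$ we get $|B|\geq(\de/4)\,p^n$, and $f\geq(\de/4)\,\1_B$ pointwise, so $\La_l(f)\geq(\de/4)^l\,\La_l(\1_B)$ and it suffices to bound $\La_l(\1_B)$ from below uniformly over all $B\subs\F_p^n$ of density $\geq\de/4$. Next fix $m:=m_0(\de/8,l,p)$. For \emph{every} $g:\F_p^m\to[0,1]$ with $\EE(g)\geq\de/4$, the set $\{g\geq\de/8\}$ has density $\geq\de/8$ and so contains a configuration as above; since $g\geq(\de/8)\,\1_{\{g\geq\de/8\}}$ and there are only finitely many subsets of $\F_p^m$, this yields a constant $c_1=c_1(\de,l,p)>0$ (one may take $c_1=(\de/8)^l\,p^{-2m}$) with
\[ \EE\big(\1_{y\neq0}\,g(x)g(x+y)\cdots g(x+(l-1)y):\ x,y\in\F_p^m\big)\ \geq\ c_1 . \]

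\emph{Lifting to $\F_p^n$.} Let $u_0,u_1,\ldots,u_m\in\F_p^n$ be independent and uniform and put $\psi(a)=u_0+a_1u_1+\cdots+a_mu_m$ for $a\in\F_p^m$. For each fixed $a$ the point $\psi(a)$ is uniform on $\F_p^n$, hence $\EE_\psi\,\EE_a\,f(\psi(a))=\EE(f)\geq\de$ and so $\EE_a\,f(\psi(a))\geq\de/2$ for a set of $\psi$ of probability $\geq\de/2$; call such $\psi$ \emph{good}. For each fixed $a$ and each fixed $b\neq0$, one has $(\psi(a),\psi(a+b),\ldots,\psi(a+(l-1)b))=(x,x+y,\ldots,x+(l-1)y)$ with $x=\psi(a)$ and $y=b_1u_1+\cdots+b_mu_m$, and a short computation shows that $(x,y)$ is then jointly uniform on $\F_p^n\times\F_p^n$; consequently $\EE_\psi\big(f(\psi(a))\cdots f(\psi(a+(l-1)b))\big)=\La_l(f)$. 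Applying the inequality of the previous step to $g=f\circ\psi$ for each good $\psi$, and then averaging over $\psi$,
\[ (1-p^{-m})\,\La_l(f)\ =\ \EE_\psi\,\EE_{a,b}\big(\1_{b\neq0}\,f(\psi(a))\cdots f(\psi(a+(l-1)b))\big)\ \geq\ \tfrac{\de}{2}\,c_1 , \]
whence $\La_l(f)\geq\tfrac{\de}{2}\,c_1=:c(\de,l,p)>0$, as required.

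Once the qualitative theorem is taken as known, no serious obstacle remains; the points that need care are the joint-uniformity assertion in the lifting step and the bookkeeping ensuring that $m$, and hence $c(\de,l,p)$, depends only on $\de$, $l$, $p$ and not on $n$. (When $l\geq p$ the configuration in the qualitative input automatically traverses a full affine line, but the argument above is unaffected.)
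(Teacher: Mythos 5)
Your proof is correct, but it is worth noting that the paper does not prove Theorem A at all: it is quoted as a known quantitative finite-field analogue of Szemer\'edi's theorem (obtainable, e.g., from the density Hales--Jewett theorem or hypergraph removal), so there is no internal argument to compare against. What you supply is the standard Varnavides-type upgrade from the qualitative to the quantitative statement, adapted to $\F_p^n$ via random affine copies of $\F_p^m$, and the delicate points are all handled correctly: the reverse-Markov selection of good maps $\psi$, the joint uniformity of $\bigl(\psi(a),\,b_1u_1+\cdots+b_mu_m\bigr)$ for fixed $a$ and fixed $b\neq 0$ (some $b_j\neq0$ makes $y$ uniform, and the independent $u_0$ makes $x$ uniform given $y$), and the fact that $m=m_0(\de/8,l,p)$, hence $c(\de,l,p)=(\de/2)(\de/8)^l p^{-2m}$, is independent of $n$; the inclusion of degenerate $y=0$ terms in $\La_l(f)$ only helps the lower bound, and your remark about $l\geq p$ is apt. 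Two small streamlinings: the opening reduction to the level set $B=\{f\geq\de/4\}$ is never used, since your steps in dimension $m$ and the lifting both work directly for $[0,1]$-valued functions; and the ``finitely many subsets of $\F_p^m$'' pigeonhole is superfluous, because a single nontrivial configuration in $\{g\geq\de/8\}$ already gives the explicit constant $c_1=(\de/8)^l p^{-2m}$. The trade-off is clear: the paper's citation keeps the exposition short, while your argument makes transparent that only the qualitative theorem is needed as a black box and exhibits exactly how the constant degrades through $m_0$.
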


Let $S_v=P^{-1}(v)$ be an algebraic set defined as the level set of a family of homogeneous polynomials $P=(P_1,\ldots,P_R):\F_p^n\to\F_p^R$ of degree $d$, $v\in\F_p^R$ being a given vector. We will be interested in counting $l$-term arithmetic progressions in $A$ with common difference $y$ in $S_v$. It is clear that in order to make this problem well-defined one needs to make a few assumptions. First $S_v$ needs to be nonempty, and in order to have a more precise formula for the size of $S_v$, a natural assumption is that the associated set of singular points
\eq S_P^*:=\{x\in\F_p^n:\ rank\,(Jac_P (x))<R,\}\ee
is small. Here $Jac_P (x)$ is the $R\times n$ matrix with entries $\partial_{x_j}P_i(x)$. This will be done by requiring that
\eq K:= codim (S_P^*)\ee is sufficiently large. Note that the dimension of an algebraic set is defined above the algebraic closure of the prime field $\F_p$.
To avoid degeneracies like the identical vanishing of certain derivatives we'll also assume that $d<p$. Let us introduce the parameters $0<\al,\be,\ga<1$ by
\eq \ga n=R,\ \be n=K,\ p^\al=d\ee
Our main result is the following.

\begin{thm}\label{main} Let $\de>0$, $\eps>0$ be given, and let $A\subs\F_p^n$ be a set of size $|A|\geq\de p^n$. Let the polynomial map $P$ and the parameters $0<\al,\be,\ga<1$ be defined as above. Then for $l\in\N$, $l\leq d$ one has uniformly in $v\in\F_p^R$ that
\eq \EE(\1_A(x)\1_A(x+y)\ldots \1_A(x+(l-1)y):\ x\in\F_p^n,\,y\in S_v)\geq c(\eps,\de,l,p),\ee
provided that
\eq \be-\al-(2^d+1)\ga\geq\eps,\ee
with a constant $c(\eps,\de,l,p)>0$ depending only on $\eps,\de,l$ and $p$.
Here $\1_A$ stands for the indicator function of the set $A$, and $S_v=P^{-1}(v)$.
\end{thm}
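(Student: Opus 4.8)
The plan is to run the circle method over $\F_p^n$ on the common difference $y$: expand the normalized indicator of $S_v$ into additive characters, isolate the $t=0$ term (which Theorem~A bounds from below), and show that the large codimension of $S_P^*$ forces every other term to be negligible, the quantitative cost of that negligibility being exactly what the hypothesis $\be-\al-(2^d+1)\ga\geq\eps$ buys. Write $e_p(u)=e^{2\pi i u/p}$ and $\1_{S_v}(y)=p^{-R}\sum_{t\in\F_p^R}e_p(t\cdot(P(y)-v))$. Then the left side of the theorem equals
\[
\frac{1}{p^n|S_v|}\cdot\frac{1}{p^R}\sum_{t\in\F_p^R}e_p(-t\cdot v)\,\Lambda_t,\qquad \Lambda_t:=\sum_{x,y\in\F_p^n}\1_A(x)\1_A(x+y)\cdots\1_A(x+(l-1)y)\,e_p\bigl(t\cdot P(y)\bigr).
\]
The term $t=0$ is $p^{-R}$ times the number of $l$-term progressions in $A$, which by Theorem~A applied to $f=\1_A$ is at least $c(\de,l,p)\,p^{2n}$; together with the two-sided bound $\tfrac12 p^{n-R}\le|S_v|\le 2p^{n-R}$ (a consequence of the exponential-sum estimates below, or quotable as a preliminary lemma on level sets, which in particular gives $S_v\neq\emptyset$), this term contributes at least $\tfrac12 c(\de,l,p)$ to the average, uniformly in $v$. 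Since there are $p^R$ values of $t$ and $|e_p(-t\cdot v)|=1$, it now suffices to show $|\Lambda_t|\le\eta\,p^{2n}$ for all $t\neq0$, with $\eta=\eta(n,P)$ satisfying $p^R\eta\to0$ as $n\to\infty$; this yields the theorem for $n$ large in terms of $\eps,\de,l,p$ and hence, after adjusting the constant, in general.

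Fix $t\neq0$ and put $w_t(y):=e_p(t\cdot P(y))$, a function bounded by $1$. A standard repeated use of the Cauchy--Schwarz (van der Corput) inequality --- the generalized von Neumann estimate for the configuration $x,x+y,\dots,x+(l-1)y$ with a weight on the common difference --- gives
\[
|\Lambda_t|\ \le\ C_l\,p^{2n}\,\|w_t\|_{U^{l}(\F_p^n)}\ \le\ C_l\,p^{2n}\,\|w_t\|_{U^{d}(\F_p^n)},
\]
by the monotonicity of the Gowers norms and the hypothesis $l\le d$. (That $l\le d$ is precisely what prevents this from being empty: for a polynomial of degree $d$ the $U^{d+1}$-norm of $w_t$ is $1$.) Hence everything reduces to the uniform bound $\|w_t\|_{U^{d}}\ll c(\de,l,p)\,p^{-R}$ for $t\neq0$.

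To evaluate $\|w_t\|_{U^{d}}$, set $\psi:=t\cdot P$, a homogeneous form of degree $d$. Since $d<p$, the $d$-fold iterated difference $\Delta_{h_1}\cdots\Delta_{h_d}\psi$ is independent of the base point and is a nonzero scalar multiple of $\Phi_\psi(h_1,\dots,h_d)$, where $\Phi_\psi$ is the symmetric $d$-linear form with $\Phi_\psi(x,\dots,x)=\psi(x)$. Thus, after an invertible linear substitution in $h_1$ and discarding the now-trivial base-point average,
\[
\|w_t\|_{U^{d}}^{2^{d}}=\EE_{h_1,\dots,h_d\in\F_p^n}e_p\bigl(\Phi_\psi(h_1,\dots,h_d)\bigr)=\Pr_{h_2,\dots,h_d\in\F_p^n}\bigl[\,u\mapsto\Phi_\psi(u,h_2,\dots,h_d)\ \text{is the zero linear form}\,\bigr],
\]
the last equality by carrying out the $h_1$-average first. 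The codimension hypothesis now enters through Euler's identity (valid as $d<p$): $\bigl(Jac_\psi(x)\bigr)_j=d\,\Phi_\psi(\underbrace{x,\dots,x}_{d-1},e_j)$, while $Jac_\psi(x)=t^{\mathsf{T}}Jac_P(x)$ and $t\neq0$ force the ``diagonal'' vanishing locus $\{x:\Phi_\psi(\underbrace{x,\dots,x}_{d-1},\cdot)\equiv0\}$ to lie inside $S_P^*$, hence to have codimension at least $K=\be n$.

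The remaining, and genuinely delicate, step is a Weyl-type inequality turning this bound on the \emph{diagonal} vanishing set into a bound on the \emph{off-diagonal} vanishing probability above: if the symmetric $d$-linear form attached to a degree-$d$ form $\psi$ has singular locus of codimension $\ge K$ in the sense just described, then
\[
\Pr_{h_2,\dots,h_d\in\F_p^n}\bigl[\Phi_\psi(\cdot,h_2,\dots,h_d)\equiv0\bigr]\ \le\ p^{-K+O((\al+\ga)n)},
\]
where the loss $O(\al n)=O(n\log_p d)$ is incurred in passing between the polynomial $\psi$ and its multilinear form (a step that costs a factor $d=p^{\al}$ per coordinate), and $O(\ga n)=O(R)$ is the price of working with the $R$-tuple $P$ and of controlling $|S_v|$. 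Granting this, $\|w_t\|_{U^{d}}\le p^{(-K+O((\al+\ga)n))/2^{d}}$, which dominates $c(\de,l,p)\,p^{-R}$ precisely once $K-O((\al+\ga)n)>2^{d}R$, i.e.\ once $\be-\al-(2^d+1)\ga\geq\eps$; this closes the estimate and hence the proof. I expect this last step --- making quantitatively precise how the codimension of a singular locus controls a genuinely multilinear exponential sum over $\F_p^n$, with the stated loss --- to be the principal obstacle; everything else is the bookkeeping of the circle method together with standard manipulations of Gowers norms and Theorem~A.
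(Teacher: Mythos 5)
Your architecture coincides with the paper's: the character expansion of $\1_{S_v}$ is the same as the paper's balanced-function decomposition $\1_S=p^{-R}+g$, the $t=0$ term is handled by Theorem A, the $t\neq0$ terms by the generalized von Neumann inequality together with monotonicity of the Gowers norms (this is where $l\le d$ enters, exactly as in the paper), and your identity $\|w_t\|_{U^d}^{2^d}=\Pr_{h_2,\dots,h_d}\bigl[\Phi_\psi(\cdot,h_2,\dots,h_d)\equiv0\bigr]$ is precisely the paper's Lemma 2, with the set $W^*(t)$ in place of $W^*$. Your exponent bookkeeping also correctly reproduces the condition $\be-\al-(2^d+1)\ga\geq\eps$.

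However, the proof is incomplete at exactly the point you flag: the bound $\Pr_{h_2,\dots,h_d}\bigl[\Phi_\psi(\cdot,h_2,\dots,h_d)\equiv0\bigr]\le p^{-K+O((\al+\ga)n)}$ is asserted, not proved, and it carries essentially all of the content of the theorem; a bound on the \emph{diagonal} vanishing locus does not formally control the full multilinear vanishing set, and no ``Weyl-type'' differencing is going to produce it. The paper closes this gap by elementary algebraic geometry rather than analysis. The set $W^*(t)=\{(h^1,\dots,h^{d-1}):\sum_i t_i\Phi^i(h^1,\dots,h^{d-1})=0\}$ is a homogeneous algebraic set cut out by $n$ equations of degree $d-1$. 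Its slice by the diagonal $\Delta=\{(h,\dots,h)\}$ is $\{h:\,Jac_P(h)^{T}t=0\}\subs S_P^*$ (this is your Euler-identity observation), so this slice has dimension at most $n-K$; since $\Delta$ is a linear subspace of codimension $(d-2)n$ and the slice is nonempty (it contains $0$), the affine dimension theorem gives $\dim W^*(t)\le (n-K)+(d-2)n=(d-1)n-K$. A Bezout-type bound gives $\deg W^*(t)\le(d-1)^n$, and the point count $|U|\le\deg(U)\,p^{\dim U}$ for algebraic sets over $\F_p$ (Ghorpade--Lachaud) yields $|W^*(t)|\le(d-1)^n\,p^{(d-1)n-K}$, i.e.\ your probability is at most $(d-1)^np^{-K}\le p^{\al n-K}$, which is exactly the loss you predicted. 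So the passage from diagonal to off-diagonal is a dimension-theoretic lower bound on the diagonal slice of an explicit algebraic set, plus degree and point-counting estimates; without supplying some argument of this kind, your proof does not go through.
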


\noindent{\underline{Remarks:}}

\begin{itemize}

\item If $n$ is large enough, it follows that $A$ contains $\approx |\F_p^n||S_v|$ non-trivial progressions of length $l$ with common difference $y\in S_v$. In particular for every $v\in\F_p^R$ there is a progression with common difference $y$ such that $P(y)=v$. As a byproduct of the proof we also obtain that $|S_v|\approx p^{n-R}$, uniformly in $v$, thus is a "sparse" set of density of $\approx p^{-\ga n}$.

\medskip

\item The condition $l\leq d$ seems necessary, as it can be seen from the following example in \cite{BO} adapted to the finite field settings. Let $d=2$, $R=1$ and $P(x)=x_1^2+\ldots +x_n^2$. Fix $p>2$ (say $p=5$) and let $A=\{x\in\F_p^n:\,P(x)=0\}$, then $A$ has density $\approx p^{-1}$. By the parallelogram identity: $P(x)-2P(x+y)+P(x+2y)=2P(y)$, if $A$ contains a 3-term arithmetic progression $\{x,x+y,x+2y\}$ then necessarily $P(y)=0$. One can construct similar examples for the polynomials $Q(x)=\sum_j x_j^d$ of degree $d$ for all $d\geq 2$. These examples also show the necessity of a condition on the singular set. Indeed (1.5) does not hold for the level sets of $P(x)=(x_1^2+\ldots,+x_n^2)^{d/2}$ ($d>2$ even) for $l>2$, while it dose hold for the level sets of $Q(x)=x_1^d+\ldots ,x_n^d$ for $l=d$. The difference is that $S_P^*$ is $n-1$-dimensional while $S_Q^*=\{0\}$.

\medskip

\item For the special case, when $v=0$ the set $A$ contains non-trivial arithmetic progressions with gap $y\in S=P^{-1}(0)$ of length $l>d$, under the more restrictive conditions that $R\leq c(\de,l,p,d)\,n$. This is based on the fact that zero set of homogeneous polynomial maps contain a large linear subspace, which follows from a theorem of Chevalley and Warning \cite{FF}, see Thm. 6.11. This will be discussed in Section 4.

\end{itemize}

\medskip

We will also study polynomial maps $P$ for which the conditions of Theorem 1 hold. Consider first diagonal forms, when $P_i(x)=\sum_{j=1}^n a_{ij}x_j^d$, $A=\{a_{ij}\}_{1\leq i\leq R,1\leq j\leq n}$ being an $R\times n$ matrix. We say that the matrix $A$ is {\em non-degenerate} if $rank\,A'=R$ for every $R\times n/2$ submatrix of $A$.

\begin{claim} If $A$ is non-degenerate then $dim\,S_P^*\leq n/2$.
\end{claim}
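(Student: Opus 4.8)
The plan is to put the Jacobian of a diagonal map into an explicitly factored form and then read the rank condition off from submatrices of $A$. Since $P_i(x)=\sum_{j=1}^n a_{ij}x_j^d$, we have $\partial_{x_j}P_i(x)=d\,a_{ij}x_j^{d-1}$, so that $Jac_P(x)=d\,A\,D(x)$, where $D(x)=\mathrm{diag}(x_1^{d-1},\dots,x_n^{d-1})$. As $d<p$, the scalar $d$ is invertible in $\F_p$, whence $\mathrm{rank}\,Jac_P(x)=\mathrm{rank}\,\big(A\,D(x)\big)$; and this identity holds over $\F_p$ and over the algebraic closure $\bF_p$ alike, since the rank of a matrix is detected by the (non)vanishing of its minors and is therefore unaffected by enlarging the ground field.

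The second step is to localize at the support of $x$. Fix $x\in\bF_p^{\,n}$ and let $J=\{j:\ x_j\neq0\}$. In $A\,D(x)$ the columns with index outside $J$ vanish, while for $j\in J$ the $j$-th column is $x_j^{d-1}$ times the $j$-th column of $A$, a nonzero scalar multiple (here $d\geq2$, which is the case of interest; for $d=1$ the assertion is trivial since then $Jac_P\equiv A$). Hence $\mathrm{rank}\,\big(A\,D(x)\big)=\mathrm{rank}\,A_J$, where $A_J$ is the $R\times|J|$ submatrix of $A$ formed by the columns indexed by $J$. If now $|J|\geq n/2$, then $A_J$ contains an $R\times\lfloor n/2\rfloor$ submatrix of $A$, which has rank $R$ by non-degeneracy, so $\mathrm{rank}\,A_J=R$ and therefore $x\notin S_P^*$.

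Consequently every $x\in S_P^*$ has $|\mathrm{supp}(x)|<n/2$, so $S_P^*$ is contained in the finite union of the coordinate subspaces $\{x:\ x_j=0\text{ for }j\notin J\}$ over index sets $J$ with $|J|<n/2$; each such subspace has dimension $|J|<n/2$, and the dimension of an algebraic set is the maximum of the dimensions of its irreducible pieces, so $\dim S_P^*\leq n/2$. This completes the argument. The computation is short once one notices the factorization $Jac_P(x)=d\,A\,D(x)$; the only points that call for a word of justification are the field-independence of rank and the reading of ``$n/2$'' (as $\lfloor n/2\rfloor$ or $\lceil n/2\rceil$, either of which works) in the non-degeneracy hypothesis, and neither presents a genuine difficulty.
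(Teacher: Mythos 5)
Your argument is correct and is essentially the paper's own proof: you observe that the $j$-th column of $Jac_P(x)$ is $d\,x_j^{d-1}$ times the $j$-th column of $A$, so a point with at least $n/2$ nonzero coordinates has full-rank Jacobian by non-degeneracy, placing $S_P^*$ inside a union of coordinate subspaces of dimension at most $n/2$. The extra remarks on invertibility of $d$, field-independence of rank, and the reading of $n/2$ are harmless refinements of the same approach.
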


\begin{proof} For $1\leq j\leq n$ let $\ba_j$ be the $j$-th column of the matrix $A$. Then the $j$th column of the Jacobian $Jac_P(x)$ is $dx_j^{d-1}\ba_j$ at $x=(x_1,\ldots,x_n)$. If $x$ has at least $n/2$ nonzero coordinates, say $x_{j_1},\ldots,x_{j_m}$ then the corresponding columns of $Jac_P(x)$ span $\F_p^R$, hence $rank\,(Jac_P(x))=R$. Thus $S_P^*$ is contained in the union of the $n/2$-dimensional coordinate hyperplanes.
\end{proof}

It is easy to see that most $R\times n$ matrices are non-degenerate. Let $A$ be a random matrix obtained by choosing each of its column vector $\ba_i$ independently with probability $p^{-R}$.

\begin{claim} Let $\ga_0:=\frac{1}{2}-\frac{\log\,2}{\log\,5}$. If $p\geq 5$, $R=\ga\,n$ with $\ga<\ga_0$, then the probability that a random matrix $A$ is non-degenerate is at least $1-p^{-(\ga_0-\ga)n}$.
\end{claim}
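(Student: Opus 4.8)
The plan is a straightforward first-moment (union-bound) argument. By definition $A$ fails to be non-degenerate precisely when there is a set $T\subs\{1,\ldots,n\}$ of size $n/2$ for which the columns $\{\ba_j:\,j\in T\}$ do not span $\F_p^R$; equivalently, since any proper subspace is contained in a hyperplane, when these columns all lie in a common hyperplane (codimension-one subspace through the origin) $H\subs\F_p^R$. So I would cover the ``bad'' event by the union, over all such $T$ and all hyperplanes $H$, of the events $E_{T,H}=\{\ba_j\in H\text{ for all }j\in T\}$.

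For fixed $T$ and $H$, the columns $\ba_j$ being independent and uniform on $\F_p^R$ and $|H|=p^{R-1}$, one gets $\PP(E_{T,H})=(p^{R-1}/p^R)^{n/2}=p^{-n/2}$. There are $\binom{n}{n/2}\le 2^n$ choices of $T$ and $(p^R-1)/(p-1)\le p^R$ hyperplanes through the origin, so the union bound yields $\PP(A\text{ degenerate})\le 2^n\,p^R\,p^{-n/2}$. Finally, substituting $R=\ga n$ and writing $2^n=p^{n\log 2/\log p}$ turns the right-hand side into $p^{n(\log 2/\log p+\ga-1/2)}$; since $p\ge 5$ implies $\log 2/\log p\le \log 2/\log 5=\tfrac12-\ga_0$, the exponent is at most $-(\ga_0-\ga)n$, which is exactly the asserted bound.

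There is no substantial obstacle here — the estimate is elementary — but the bookkeeping is tight: the constant $\ga_0=\tfrac12-\tfrac{\log 2}{\log 5}$ is precisely what is needed to absorb the factor $2^n$ coming from $\binom{n}{n/2}$ in the borderline case $p=5$, so one cannot afford to be wasteful in bounding either $\binom{n}{n/2}$ or the number of hyperplanes; any cruder estimate would force a smaller $\ga_0$. (A minor technical point, affecting nothing: $n/2$ should be read as $\lfloor n/2\rfloor$, or $n$ assumed even.)
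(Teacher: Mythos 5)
Your argument is correct and is essentially the paper's own proof: the paper likewise applies a union bound over the at most $p^R$ hyperplanes of $\F_p^R$ and the at most $2^n$ column subsets, each event having probability $p^{-n/2}$, arriving at the same bound $2^np^{R-n/2}\leq p^{-(\ga_0-\ga)n}$ via $\log 2/\log p\leq\log 2/\log 5$. The only (immaterial) differences are the order in which the two unions are taken and your closing remark about tightness, which is slightly overstated since bounding the number of hyperplanes by $p^R$ rather than $p^{R-1}$ already leaves a little slack.
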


\begin{proof} For a given subspace $M\leq \F_p^n$ of codimension 1, the probability that the rows $\ba_{i_1},\ba_{i_2},\ldots,\ba_{i_m}$ of the random matrix $A$ are all contained in $M$ is $p^{-m}$. Thus the probability that $M$ contains at least $n/2$ columns of $A$ is less than $2^n p^{-n/2}$. Since there are $p^R$ distinct $R-1$ dimensional subspaces, the probability that none of them will contain at least $n/2$ columns of $A$ is at least
\[1-2^n p^{R-\frac{n}{2}} \geq 1-p^{-(\ga_0-\ga)n}.\]
In that case $A$ is non-degenerate.
\end{proof}

This implies that condition (1.6) holds for such maps as long as $\al+(2^d+1)\ga\leq 1/2-\eps$.
In fact we will show that it also holds for generic (non-diagonal) polynomial maps as long as $\al$ and $\ga$ is chosen sufficiently small. More precisely, let $\mathcal{P}(n,d)$ be the space of homogeneous polynomials $P:\bF_p^n\to\bF_p$ of degree $d$, which is an $N={n+d-1 \choose d}$ dimensional linear space over $\bF_p$, the algebraically closed field of characteristic $p$. Then $\bF(n,d)^R$ is the space of $R$-tuples of such polynomials. One has

\begin{prop}\label{generic} Let $1\leq R\leq L\leq n$ be given. Then the locus of polynomial maps $P\in \bF(n,d)^R$ such that $dim\,S_P^*\geq L$ is contained in an algebraic set of codimension at least $L-2R+2$.
\end{prop}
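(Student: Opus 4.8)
The plan is to argue by a parameter count through an incidence variety over the algebraic closure $\bF_p$. Write $V:=\bF(n,d)^R$ for the parameter space, so $\dim V=RN$ with $N=\binom{n+d-1}{d}$, and consider the resolved incidence variety
\[
\widetilde{\mathcal Z}:=\{\,(P,[x],[\Lambda])\in V\times\mathbb P^{n-1}\times\mathbb P^{R-1}\ :\ \sum\nolimits_{i=1}^{R}\Lambda_i\,\nabla P_i(x)=0\,\},
\]
where $\nabla P_i(x)=(\partial_{x_1}P_i(x),\dots,\partial_{x_n}P_i(x))$, together with its projections $\widetilde\pi$ to $\mathbb P^{n-1}\times\mathbb P^{R-1}$ and $q$ to $V$. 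For $x\neq0$ the displayed relation says that $[\Lambda]$ lies in the left kernel of the $R\times n$ matrix $Jac_P(x)$, so the image $\mathcal Z:=q(\widetilde{\mathcal Z})\subset V\times\mathbb P^{n-1}$ is exactly $\{(P,[x]):rank\,Jac_P(x)<R\}$, and $q$ is generically one-to-one onto it (where $Jac_P(x)$ has rank precisely $R-1$ the left kernel is a single line), whence $\dim\mathcal Z=\dim\widetilde{\mathcal Z}$. The fibre of the induced projection $\pi:\mathcal Z\to V$ over $P$ is $\mathbb P(S_P^*)$, the projectivization of the cone $S_P^*$, so $\dim S_P^*=\dim\mathbb P(S_P^*)+1$, and hence the locus to be bounded is
\[
W:=\{\,P\in V:\ \dim S_P^*\geq L\,\}=\{\,P\in V:\ \dim\pi^{-1}(P)\geq L-1\,\}.
\]

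The crux is the computation $\dim\widetilde{\mathcal Z}=RN+R-2$, which rests on the claim that for every $([x],[\Lambda])$ with $x\neq0$ and $\Lambda\neq0$ the linear map $V\to\bF_p^n$, $P\mapsto\sum_i\Lambda_i\nabla P_i(x)$, is surjective. I would prove this by observing that $GL_n$ acts transitively on $\bF_p^n\setminus\{0\}$ and the condition is $GL_n$-equivariant, so one reduces to $x=e_1$; fixing an index $i_0$ with $\Lambda_{i_0}\neq0$ and taking the other components of $P$ to be $0$, it suffices that $P_{i_0}\mapsto\nabla P_{i_0}(e_1)$ be onto, and this holds because $\partial_{x_j}P_{i_0}(e_1)$ recovers — up to the scalar $d\not\equiv0\pmod p$, which is where $d<p$ enters — the coefficient of $x_1^{d-1}x_j$ in $P_{i_0}$, and these coefficients are free parameters. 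Granting the claim, $\widetilde{\mathcal Z}$ is the kernel of a fibrewise-surjective morphism of vector bundles over the irreducible base $\mathbb P^{n-1}\times\mathbb P^{R-1}$, hence itself (the total space of) a vector bundle of rank $RN-n$; so it is irreducible of dimension $(n-1)+(R-1)+(RN-n)=RN+R-2$, and $\dim\mathcal Z=RN+R-2$.

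To finish, I would first check that $W$ is closed in $V$: the set of points of $\mathcal Z$ where the fibre of $\pi$ has local dimension $\geq L-1$ is closed by upper semicontinuity of fibre dimension, and $\pi$ is proper since $\mathbb P^{n-1}$ is projective, so its image $W$ is closed. Then, for an irreducible component $W'$ of $W$, decompose $\pi^{-1}(W')\subset\mathcal Z$ into its finitely many irreducible components $C_k$; since each $P\in W'$ meets some $C_k$ in a fibre of dimension $\geq L-1$, and each set $\{P\in W':\dim(C_k\cap\pi^{-1}(P))\geq L-1\}$ is closed, irreducibility of $W'$ forces one of them to equal $W'$, producing an irreducible closed $C\subset\mathcal Z$ with $\pi(C)=W'$ and all fibres of $\pi|_C:C\to W'$ of dimension $\geq L-1$. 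The theorem on the dimension of the fibres of a dominant morphism of irreducible varieties then gives
\[
\dim W'\ \leq\ \dim C-(L-1)\ \leq\ \dim\mathcal Z-(L-1)\ =\ RN-(L-R+1),
\]
so every component of $W$ has codimension at least $L-R+1\geq L-2R+2$ in $V$ (the last inequality since $R\geq1$); this proves the proposition, and in fact with the slightly stronger bound $L-R+1$.

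The main obstacle is the surjectivity of $P\mapsto\sum_i\Lambda_i\nabla P_i(x)$ — the one genuinely arithmetic ingredient, and the only place $d<p$ is used (it fails if $p\mid d$, since then $\partial_{x_1}x_1^d\equiv0$) — together with the somewhat fussy bookkeeping in the last step that replaces the a priori reducible pair $(\pi^{-1}(W'),W')$ by an honest irreducible dominant morphism. Everything else is formal.
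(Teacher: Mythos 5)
Your proof is correct and reaches the stated conclusion by a genuinely different route --- in fact it establishes the stronger bound that the locus has codimension at least $L-R+1$. The paper reduces to the case of a single polynomial: it covers $S_P^*$ by the singular loci $S_{P_\mu}^*$ of the linear combinations $P_\mu=\mu_1P_1+\ldots+\mu_RP_R$ as $\mu$ ranges over $\mathbb{P}^{R-1}$, invokes the Smith--Varley result \cite{Varl} that the critical locus $\mathcal{C}\subseteq\mathcal{P}(n,d)\times\bF_p^n$ has dimension $N-1$ in order to bound the codimension of $\mathcal{D}_K=\{Q:\ \dim S_Q^*\geq K\}$ below by $K$, and then pulls this back along $(\mu,P)\mapsto P_\mu$ and projects. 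That detour pays the price $R-1$ twice --- once in passing from $\dim S_P^*\geq L$ to $\dim S_{P_\mu}^*\geq L-R+1$, and once more in projecting $\Phi^{-1}\mathcal{D}_K$ from $\mathbb{P}^{R-1}\times\mathcal{P}(n,d)^R$ down to $\mathcal{P}(n,d)^R$ --- which is exactly why the paper ends at $L-2R+2$. You instead work directly with the $R$-tuple, replacing the appeal to the external dimension count for $\mathcal{C}$ by the explicit observation that $P\mapsto\sum_i\Lambda_i\nabla P_i(x)$ is surjective for $x\neq0$, $\Lambda\neq0$ (your coefficient computation at $x=e_1$ is right, and you correctly isolate $d<p$ as the arithmetic input), so that $\dim\widetilde{\mathcal Z}=RN+R-2$ and the $R-1$ coming from the $\Lambda$-direction is only paid once. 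Your closing bookkeeping --- upper semicontinuity of fibre dimension plus properness to see the locus is closed, and the component-selection step that produces an honest dominant morphism of irreducible varieties --- is more careful than what the paper writes and is sound. Two minor remarks: only the inequality $\dim\mathcal Z\leq\dim\widetilde{\mathcal Z}$ is needed, so the generic injectivity of $q$ is dispensable; and the identity $\dim S_P^*=\dim\mathbb{P}(S_P^*)+1$ requires $S_P^*\neq\{0\}$, which holds here since $L\geq 1$.
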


This mean that for generic polynomial maps $P$ the dimension of the singular variety satisfies the bound: $dim\,S_P^*\leq 2R-2$. If $R=\ga\,n$ then for generic maps one can take $\be=1-2\ga$, thus (1.6) holds if: $\al+(2^d+3)\ga\leq 1-\eps$.

\subsection{Outline of the Proof.} To begin we shall need a generalized von Neumann inequality for restricted progressions. To state it, let us briefly recall the Gowers uniformity norms. The multiplicative derivative of a
function $f$ is given by $\Delta_hf(x)=f(x+h)\overline{f(x)}$, and
higher derivatives as
$\Delta_{h_1,...,h_l}=\Delta_{h_l}(\Delta_{h_1,...,h_{l-1}})$. The
$U^l$-Gowers norm is then given by
\[ ||f||_{U^l}^{2^l}=\mathbb{E}(\Delta_{h_1,...,h_l}f(x):x,h_1,...,h_l\in
\mathbb{F}_p^n).\] This norm represents the average of $f$ over
'cubes' in $\mathbb{F}_p^n$, sets of the form
\[\{x+\omega_1h_1+...+\omega_lh_l:\omega_1,...,\omega_l\in\{0,1\}^l\},\]
and is indeed a norm for integers $l>1$ ($l=1$ provides a
semi-norm). On fact we shall need is the monotonicity formula
\eq ||f||_{U^{l-1}}\leq||f||_{U^{l}}.\ee
For the above definitions and facts on may consult \cite{taovu}.
For a pair of functions $f,g:\F_p^n\to\mathbb{C}$ define the form
\[
\widetilde{\Lambda_l}(f,g)=\mathbb{E}(f(x)f(x+r)...f(x+(l-1)r)g(r):x,r\in\mathbb{F}_p^n).\]

\begin{lemma} (Generalized von Neumann inequality) \label{vN1}

For functions $f,g$ bounded in absolute value by one, one has
\[\widetilde{\Lambda_l}(f,g)\leq ||g||_{U^l}.\]
\end{lemma}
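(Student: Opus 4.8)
The plan is to prove the generalized von Neumann inequality by the standard Cauchy--Schwarz iteration argument, exploiting the fact that the form $\widetilde{\Lambda_l}(f,g)$ weights the progression $x,x+r,\dots,x+(l-1)r$ against $g(r)$, which depends only on the common difference $r$. First I would substitute $r$ for one of the variables and reorganize the expectation so that the $g(r)$ factor sits alongside exactly one of the $f$-factors; the key point is that in the definition of $\widetilde{\Lambda_l}$ the weight is attached to $r$, so after a linear change of variables it looks structurally like the form controlling the $U^{l}$-norm rather than the $U^{l-1}$-norm (this is why we get $\|g\|_{U^l}$ rather than $\|g\|_{U^{l-1}}$, matching the ``restricted'' nature of the count).

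The concrete steps: (1) Write $\widetilde{\Lambda_l}(f,g) = \EE_{x,r}\big(g(r)\prod_{i=0}^{l-1} f(x+ir)\big)$. Make the substitution that isolates $g$: set $y_0 = x$ and treat the progression points $y_i = x+ir$ as the relevant linear forms. (2) Apply the Cauchy--Schwarz inequality $l$ times, each time doubling the $r$-variable (the variable carrying $g$) and one of the auxiliary directions, so as to eliminate the $l$ factors of $f$ one at a time, exactly as in the proof that $\Lambda_l(f,\dots,f) \le \|f\|_{U^{l-1}}$ but now tracking the extra weight $g(r)$. (3) After the $l$ applications, the surviving average is precisely an average of $\Delta_{h_1,\dots,h_l} g$ over $x, h_1,\dots, h_l$, which by definition equals $\|g\|_{U^l}^{2^l}$; since $|f|\le 1$ each Cauchy--Schwarz step costs at most a harmless factor bounded by $1$, and taking $2^l$-th roots yields $\widetilde{\Lambda_l}(f,g)\le \|g\|_{U^l}$.

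The step I expect to be the main obstacle is bookkeeping the Cauchy--Schwarz iteration so that the $g$-weight lands on a genuine $l$-dimensional cube sum rather than an $(l-1)$-dimensional one. Concretely, in the usual von Neumann argument for unweighted progressions one eliminates $l$ functions using only $l-1$ Cauchy--Schwarz steps because the points $x+ir$ span an $l$-dimensional affine subspace parametrized by just two free variables; here, because $g$ is a genuinely new function of $r$ alone, one must be careful to double the $r$-variable at every stage and verify that the linear forms in the remaining $f$'s stay in ``general position'' so that each application legitimately removes one $f$. One should check that $l \le p$ (guaranteed since $l\le d < p$) so that the coefficients $0,1,\dots,l-1$ are distinct and invertible modulo $p$, which is what makes the changes of variables nondegenerate; this is a routine but essential point. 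Apart from that, the argument is entirely formal: repeated Cauchy--Schwarz plus the definition of the Gowers norm, with the monotonicity formula $(1.7)$ available if one wishes to state the weaker bound $\widetilde{\Lambda_l}(f,g)\le \|g\|_{U^{l}}$ in terms of any higher norm.
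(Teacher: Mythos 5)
Your proposal is correct: the paper itself gives no proof of this lemma (it is stated as a standard fact, with the Gowers-norm background deferred to Tao--Vu), and the standard Cauchy--Schwarz iteration you describe --- $l$ applications, each doubling the variable carrying $g$ and discarding one $f$-factor via $|f|\le 1$, ending with the $2^l$-fold cube average $\|g\|_{U^l}^{2^l}$ --- is exactly the intended argument, with the nondegeneracy of the changes of variables secured by $l\le d<p$. The only slip is a harmless one in your aside about the unweighted case, where one eliminates $l-1$ (not $l$) of the $f$-factors in $l-1$ Cauchy--Schwarz steps to reach $\|f\|_{U^{l-1}}$.
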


\medskip

To apply this result in combination with Theorem A, one finds
an appropriate balanced function of the level set $S=P^{-1}(v)$, say
$g=\mathbf{1}_S-\rho$ for an appropriate constant $\rho$, and writes
\eq
\widetilde{\Lambda_l}(f,\mathbf{1}_S)=\rho\Lambda_lf+\widetilde{\Lambda_l}(f,g).\ee
The first term applies to Theorem A, while the second can be
bounded by $||g||_{U^l}$ by Lemma \ref{vN1}. Then it remains to show
that $\rho$ can be chosen properly as to give $||g||_{U^l}$ small. Thus the crucial step is to obtain the following bound, which may be of interest on its own.

\begin{prop}\label{norm} Let $v\in\F_p^R$ and let $S=P^{-1}(v)$, where $P=(P_1,\ldots,P_R):\F_p^n\to\F_p^R$ is a homogeneous polynomial map of degree $d$. Then one has
\eq ||\mathbf{1}_S-p^{-R}||_{U^d}\leq (d-1)^{2^{-d}n}\
p^{2^{-d}(R-K)},\ee
where $K=codim (S_P^*)$, $S_P^*$ being the singular variety associated to $P$.
\end{prop}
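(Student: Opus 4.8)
The plan is to expand the $U^d$-norm of the balanced function $g = \1_S - p^{-R}$ directly as an average over $d$-dimensional cubes, convert the indicator function $\1_S$ into an exponential sum over $\F_p^R$, and then estimate the resulting complete exponential sum by bounding the associated Weyl-type sums off the singular variety. Concretely, writing $\1_{S_v}(x) = p^{-R}\sum_{t\in\F_p^R} e_p(t\cdot(P(x)-v))$, where $e_p(\cdot) = e^{2\pi i \cdot /p}$ and $t\cdot P = \sum_i t_i P_i$, the balanced function is $g(x) = p^{-R}\sum_{t\neq 0} e_p(t\cdot(P(x)-v))$. Substituting into
\[
\|g\|_{U^d}^{2^d} = \EE\bigl(\textstyle\prod_{\omega\in\{0,1\}^d} \mathcal{C}^{|\omega|} g(x+\omega\cdot h) : x, h_1,\ldots,h_d\in\F_p^n\bigr),
\]
where $\mathcal{C}$ denotes complex conjugation and $|\omega|=\omega_1+\cdots+\omega_d$, one gets a sum over $2^d$ frequency parameters $t_\omega\in\F_p^R\setminus\{0\}$ of averages of $e_p$ applied to an alternating sum of $P(x+\omega\cdot h)$ over the cube. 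The key algebraic fact is that, since $P$ has degree $d$, the $d$-th "cube difference" $\sum_\omega (-1)^{|\omega|} P(x+\omega\cdot h)$ is a fixed multilinear form in $h_1,\ldots,h_d$ (the full polarization of the degree-$d$ part), independent of $x$; more usefully, for the cross terms one isolates the $h_d$-variable and observes that $\Delta_{h_d} P(x+\cdots)$ has degree $\leq d-1$, so iterating the derivative $d$ times in the $h_i$ directions linearizes things.

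The main steps, in order, would be: (1) expand $\|g\|_{U^d}^{2^d}$ and open each $g$ into its exponential sum, producing $(p^R-1)^{2^d} \leq p^{2^d R}$ terms indexed by nonzero frequencies $t_\omega$, each term being an average over $x,h_1,\ldots,h_d$ of $e_p$ of a polynomial expression; (2) for each such term, perform the $x$-summation and the $h_1,\ldots,h_{d-1}$ summations to reduce, via repeated differencing, to a sum in the single variable $h_d$ of $e_p(\,\ell_{t}(h_d)\,)$ where $\ell_t$ is (essentially) a linear form whose coefficient vector is $\sum_\omega \pm t_\omega \nabla P$-type data evaluated at the other variables — the point being that this inner sum in $h_d$ over $\F_p^n$ is either $p^n$ (if the linear form vanishes identically) or $0$; (3) bound the number of choices of the auxiliary variables for which the linear form degenerates: this is exactly where the nonvanishing of $\mathrm{rank}\, Jac_P$ enters, since degeneracy forces the relevant point to lie near $S_P^*$, a set of codimension $K$, contributing a factor $p^{-K}$ per frequency block, and the combinatorial factor $(d-1)^n$ comes from counting the number of monomials / the degree-$(d-1)$ polynomial whose zero set must be controlled (Schwartz–Zippel / Lang–Weil–type point count over $\F_p$, using $d<p$). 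Collecting: $p^{2^d R}$ terms, each gaining $p^{-2^d K}$ from the rank condition times $(d-1)^n$ from the point counts, then taking the $2^d$-th root yields $\|g\|_{U^d} \leq (d-1)^{2^{-d}n} p^{2^{-d}(R-K)}$, as claimed.

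The hard part will be step (3): making precise the claim that the inner sum in $h_d$ vanishes \emph{unless} the auxiliary data lands on a low-degree subvariety closely tied to $S_P^*$, and then getting the clean point count $(d-1)^n p^{\,n(2^d-1)}$ (or the analogue) for that bad locus. One has to track carefully how the singular variety of $P$ itself — not of some derived polynomial — governs the degeneracy, which requires relating the vanishing of the multilinear coefficient forms back to $\mathrm{rank}\,Jac_P(x) < R$; the homogeneity of $P$ and the hypothesis $d<p$ (so that the formal $d$-th derivative recovers the top-degree form up to a unit $d!$) are essential here. A secondary technical point is bounding the contribution of the frequencies where the $t_\omega$ are linearly dependent in a way that makes the form degenerate for \emph{all} auxiliary variables; these should be absorbed into the same estimate since they are even rarer. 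Everything else — the expansion, the geometric series for $\1_S$, the orthogonality of additive characters — is routine.
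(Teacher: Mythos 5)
Your overall strategy (Fourier-expand $\1_S$, Weyl-difference, and bound the contribution of a singular locus of codimension $K$) is the right one, but as written there are two genuine gaps. First, the structural one: if you expand all $2^d$ copies of $g$ in $\|g\|_{U^d}^{2^d}$ into independent frequencies $t_\omega$, the resulting phase $\sum_\omega(-1)^{|\omega|}t_\omega\cdot P(x+\omega\cdot h)$ is a general degree-$d$ polynomial in $h_d$; the cross terms only telescope down to something linear in $h_d$ when all the frequencies coincide. So your claim in step (2) that the inner $h_d$-sum is ``either $p^n$ or $0$'' fails for generic choices of the $t_\omega$, and the estimate collapses at that point. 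The paper avoids this by applying the triangle inequality for the $U^d$-norm to the expansion $g=p^{-R}\sum_{\alpha\neq 0}e(-\alpha\cdot v)e(\alpha\cdot P)$ \emph{before} opening the cube, reducing everything to $\|e(\alpha\cdot P)\|_{U^d}$ for a single nonzero $\alpha$ at a cost of only $p^{-R}(p^R-1)\leq 1$; then $\Delta_{h_1,\ldots,h_d}e(\alpha\cdot P(x))=e(\Phi^T(h^1,\ldots,h^{d-1})\alpha\cdot h^d)$ genuinely is linear in $h^d$ and the $0$-or-$p^n$ dichotomy holds. (You could instead repair your version by $d$ successive Cauchy--Schwarz steps to decouple the frequencies, but that is precisely the proof of the triangle inequality.)

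Second, the point you defer as ``the hard part'' is the heart of the proposition and is not supplied. The bad locus is $W^*=\{(h^1,\ldots,h^{d-1}):\ rank\,\Phi(h^1,\ldots,h^{d-1})<R\}\subseteq\F_p^{(d-1)n}$, where $\Phi^i_j$ are the polarized multilinear forms, and the paper controls it by two specific devices you would need to reproduce: (i) restricting $W^*$ to the diagonal $\Delta=\{(h,\ldots,h)\}$ recovers exactly $S_P^*$ (since $\Phi^i_j(h,\ldots,h)=(d-1)!\,\partial_{x_j}P_i(h)$, using $d<p$), whence $codim\,W^*\geq codim\,S_P^*=K$; and (ii) decomposing $W^*=\bigcup_{\lambda\neq 0}W^*(\lambda)$ according to the linear dependence among the rows, each $W^*(\lambda)$ being cut out by $n$ equations of degree $d-1$, and invoking a Bezout-type degree bound together with $|U|\leq \deg(U)\,p^{\dim U}$ to get $|W^*|\leq(d-1)^n\,p^R\,p^{(d-1)n-K}$. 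This is where both the factor $(d-1)^n$ and the exponent $R-K$ come from; a generic Schwartz--Zippel or Lang--Weil appeal does not identify the correct bad set. Finally, your exponent bookkeeping does not close: $p^{2^dR}$ terms each gaining $p^{-2^dK}(d-1)^n$ would give $(d-1)^{2^{-d}n}p^{R-K}$ after the $2^d$-th root, not the claimed $(d-1)^{2^{-d}n}p^{2^{-d}(R-K)}$; the correct accounting (one frequency after the triangle inequality, one factor $p^R$ from the union over $\lambda$, one factor $p^{-K}$ from the codimension, all inside the $2^d$-th root) lands on the stated bound.
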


\begin{proof}[\underline{Proof of Theorem \ref{main}}] Let the parameters $0<\al,\be,\ga <1$ be defined as in (1.4) and assume that condition (1.6) holds. First, note that by (1.9) and (1.6) we have that
\[\left|p^{-n}|S|-p^{-R}\right|=\|\1_S-p^{-R}\|_{U^1}\leq p^{n(\al+\ga-\be)2^{-d}}\leq p^{-\eps n}p^{-R}\]
thus in particular $|S|=p^{n-R}(1+O(p^{-\eps n}))$.

\medskip

\noindent Let $f=\1_A$, $\rho=p^{-R}$, $g=\1_S-p^{-R}$, then by (1.8)
\[\EE\,(\1_A(x)\1_A(x+y)\ldots \1_A(x+(l-1)y):\ x\in\F_p^n,\,y\in S)=\]
\[= p^n |S|^{-1}\,( p^{-R}\Lambda_lf+ \widetilde{\Lambda_l}(f,g)) = (1+O(p^{-\eps n}))\,(\Lambda_lf+p^R \widetilde{\Lambda_l}(f,g))\]

\noindent By Theorem A the first term satisfies
\[\Lambda_lf\geq c(\de,l,p)\]
while by (1.9) and (1.6) the second term is at most
\[p^R \widetilde{\Lambda_l}(f,g))\leq p^R p^{n(\al+\ga-\be)2^{-d}}\leq p^{-\eps n}\]
This implies that the left side of (1.5) is at least $c(\de,l,p)/2$ for $n\geq n(\eps,\de,l,p)$, while it is trivially at least $p^{-n}$ for all $n$'s. This proves Theorem 1.
\end{proof}

It remains to prove Proposition \ref{norm}. The starting point is the identity\[
\mathbf{1}_S(x)=\mathbb{E}(e(\alpha\cdot
(P(x)-v):\alpha\in\mathbb{F}_p^R)=p^{-R}+
p^{-R} \sum_{\alpha\in\mathbb{F}_p^R,\,\alpha\neq0} e(-\al\cdot v)\,e(\alpha\cdot
P(x)).\] The triangle
inequality for the Gowers norms then gives

\[||g||_{U^d}\leq p^{-R} \sum_{\alpha\in\mathbb{F}_p^R,\,\alpha\neq0} (||e(\alpha\cdot
P(x))||_{U^d},\] reducing our
task to bounding $||e(\alpha\cdot P(x))||_{U^d}$ for a nonzero
$\alpha$.

\medskip

We apply the method of Birch \cite{Birch} to achieve a bound in terms of an explicitly given algebraic set $W^*\subs\F_p^{(d-1)n}$, which will be discussed in detail in the next section.

\medskip

\begin{lemma}~\label{norm}
If $\alpha\neq0$ in $\mathbb{F}^n_p$, then we have
\eq
||e(\alpha\cdot P(\cdot)||_{U^d}^{2^d}\leq
\frac{|W^*|}{p^{(d-1)n}}.\ee
\end{lemma}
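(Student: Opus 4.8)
The plan is to unfold the $U^d$ norm of the exponential sum $e(\alpha\cdot P(\cdot))$ by taking $d$ successive multiplicative derivatives, following Birch's classical argument for estimating Weyl sums attached to forms. Recall that $\|e(\alpha\cdot P(\cdot))\|_{U^d}^{2^d}$ is, by definition, the average over $x,h_1,\dots,h_d\in\F_p^n$ of $\Delta_{h_1,\dots,h_d}e(\alpha\cdot P(x))$. The key point is that each application of $\Delta_h$ lowers the degree of the phase by one: since $\alpha\cdot P$ is a polynomial of degree $d$, the iterated difference $\Delta_{h_1,\dots,h_d}(\alpha\cdot P)(x)$ is a polynomial that is \emph{linear} (in fact multilinear) in $x$ and in each $h_i$. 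Concretely, writing $\alpha\cdot P$ as a sum of monomials of degree $d$ and using that $d<p$, the $d$-th multiplicative derivative takes the form $e(\Psi_\alpha(h_1,\dots,h_d)\cdot x + (\text{terms not involving }x))$, where $\Psi_\alpha:\F_p^{(d-1)n}\to\F_p^n$ (after fixing, say, $h_d$ or symmetrizing appropriately) is the associated multilinear map built from the coefficients of $\alpha\cdot P$; up to a harmless constant this is the symmetric $d$-linear form obtained by full polarization of $\alpha\cdot P$.

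First I would carry out this unfolding explicitly and perform the average over $x\in\F_p^n$ for fixed $h_1,\dots,h_d$. By orthogonality of additive characters on $\F_p^n$, $\EE_x\, e(c\cdot x)$ equals $1$ if $c=0$ and $0$ otherwise. Hence the $x$-average kills every tuple $(h_1,\dots,h_d)$ except those for which the linear coefficient $\Psi_\alpha(h_1,\dots,h_d)$ vanishes identically. Therefore $\|e(\alpha\cdot P(\cdot))\|_{U^d}^{2^d}$ is bounded by (in fact, up to the phase-independent-of-$x$ factors, equal to) the normalized count of $(h_1,\dots,h_{d})\in\F_p^{dn}$ on which this multilinear form degenerates; absorbing one variable, this is exactly $|W^*|/p^{(d-1)n}$, where $W^*\subs\F_p^{(d-1)n}$ is defined to be the algebraic set cut out by the vanishing of the components of $\Psi_\alpha$ (equivalently, by the bilinear-type equations $\nabla(\text{polar form}) = 0$ that Birch uses). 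Thus the statement amounts to identifying this vanishing locus with the set $W^*$ as defined in the next section, and checking that the residual phase factors $e(\cdots)$ that do not depend on $x$ have absolute value $1$ and so only contribute the trivial bound after taking absolute values inside the remaining $h$-average.

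Two small technical points need care. The condition $d<p$ from the paper's standing assumptions is essential here: it guarantees that polarization of a degree-$d$ form is nondegenerate (the multinomial coefficient $d!$ is invertible mod $p$), so that the top multilinear coefficient genuinely recovers the information of $\alpha\cdot P$ and does not collapse. Second, one must be slightly careful about which of the $d$ difference variables is singled out to produce a set in $\F_p^{(d-1)n}$ rather than $\F_p^{dn}$: after the $x$-average one has a condition on $d$ vectors $h_1,\dots,h_d$ that is linear in each, so the solution set is a cone, and projecting away one coordinate block (or equivalently dividing the count by $p^n$) yields the bound with exponent $(d-1)n$ in the denominator. I would set up $W^*$ precisely so that this bookkeeping is clean.

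The main obstacle is not any single estimate but rather the bookkeeping: writing down the iterated multiplicative derivative of $\alpha\cdot P$ cleanly, correctly isolating the part of the resulting phase that is linear in $x$, and matching the vanishing locus of its coefficient with the explicit algebraic set $W^*$ that the paper introduces in the following section. Once the identity $\Delta_{h_1,\dots,h_d}(\alpha\cdot P)(x) = \Psi_\alpha(h_1,\dots,h_d)\cdot x + (\text{$x$-free})$ is in hand, the character orthogonality step and the passage to $|W^*|/p^{(d-1)n}$ are immediate. I expect the argument to mirror Birch's treatment in \cite{Birch} essentially verbatim, transplanted from $\Z$ to $\F_p^n$, with the finite-field orthogonality relation replacing the classical Weyl differencing inequality's bound on one-dimensional exponential sums.
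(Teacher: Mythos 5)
Your proposal follows the paper's proof essentially verbatim: difference the degree-$d$ phase down to a multilinear form and use orthogonality of additive characters to reduce the $U^d$ count to the normalized size of the degeneracy locus. Two small corrections to your bookkeeping: after all $d$ multiplicative derivatives the phase equals $e(\Phi^T(h^1,\ldots,h^{d-1})\alpha\cdot h^d)$ and no longer depends on $x$ at all, so it is the average over $h^d$ (not over $x$) that annihilates the non-degenerate tuples; and the resulting vanishing locus $\{(h^1,\ldots,h^{d-1}):\Phi^T(h^1,\ldots,h^{d-1})\alpha=0\}$ is only \emph{contained} in the paper's $\alpha$-independent rank-deficiency set $W^*$, which is why the lemma is stated as an inequality rather than an identity.
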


The set $W^*$ appears in the work of Birch on exponential sums \cite{Birch} and is closely related to the singular set $S_P^*$, see (2.6) below. In particular if one embeds $\mathbb{F}_p^n$ into the diagonal
$\Delta\subseteq\mathbb{F}_p^{(d-1)n}$ via the map $\Phi(h)=(h,\ldots,h)$,
then $\Phi(S_P^*)=W^*\bigcap \Delta$. As $\Delta$ is a linear subspace
of codimension $(d-2)n$, it follows that $dim(W^*)\leq
(d-2)n+dim(S_P^*)$. Thus, $codim(W^*)\geq codim(S_P^*)=K$.
Also $W^*$ can be partitioned into algebraic sets $W^*_\la$, ($\la\in\F_p^R\backslash\{0\}$) such that
$W^*_\la$ is defined by $n$ equations of degree $d-1$. Then basic facts from algebraic geometry give

\begin{lemma}\label{Wbound}
With $W^*$ as above, we have $|W^*|\leq (d-1)^n p^R\,p^{(d-1)n-K}$.
\end{lemma}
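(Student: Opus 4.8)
The plan is to derive the bound on $|W^*|$ from the structure of $W^*$ as a union of algebraic sets $W^*_\la$, each cut out by $n$ equations of degree $d-1$, together with the codimension bound $codim(W^*)\geq K$. First I would recall the standard estimate from algebraic geometry (see e.g. the treatment in Birch's paper or Schmidt's monograph): if $V\subseteq \bF_p^m$ is an affine variety cut out by $s$ polynomial equations each of degree at most $e$, and $\dim V = m - k$, then the number of $\F_p$-points satisfies $|V(\F_p)| \leq C\, e^{?}\, p^{\dim V}$, where the constant depends only on $s$ and $e$. In the regime relevant here, where $W^*_\la$ is defined by exactly $n$ equations of degree $d-1$ inside $\bF_p^{(d-1)n}$, the sharp form of this bound gives $|W^*_\la(\F_p)| \leq (d-1)^n\, p^{\dim W^*_\la}$, the factor $(d-1)^n$ being the product of the degrees of the defining hypersurfaces (or equivalently a Bézout-type bound on the number of top-dimensional components, each contributing at most $p^{\dim}$ points up to lower-order terms which are absorbed).

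Next I would assemble the pieces. Since $W^* = \bigcup_{\la \in \F_p^R\setminus\{0\}} W^*_\la$ is a union of fewer than $p^R$ such sets, and each satisfies $\dim W^*_\la \leq \dim W^* = (d-1)n - K$ (as each $W^*_\la$ is a subvariety of $W^*$), we get
\[
|W^*(\F_p)| \leq \sum_{\la \neq 0} |W^*_\la(\F_p)| \leq p^R \cdot (d-1)^n\, p^{(d-1)n - K},
\]
which is exactly the claimed bound. The codimension estimate $codim(W^*)\geq K$ itself I would justify as indicated in the excerpt: embedding $\F_p^n$ diagonally into $\F_p^{(d-1)n}$ via $\Phi$, one has $\Phi(S_P^*) = W^* \cap \Delta$ where $\Delta$ has codimension $(d-2)n$, so $\dim W^* \leq (d-2)n + \dim S_P^* = (d-2)n + (n-K) = (d-1)n - K$.

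The main obstacle is pinning down the precise form of the point-counting bound for the individual $W^*_\la$ so that the constant comes out as exactly $(d-1)^n$ rather than something weaker like $C(d)^n$ with an unspecified $C(d)$. This requires care in citing (or re-deriving) the right version of the Lang–Weil / Schmidt / Birch-type estimate: one wants the statement that an intersection of $n$ hypersurfaces of degree $d-1$ in affine $(d-1)n$-space has at most $(d-1)^n\, p^{\dim}$ rational points, which follows from an inductive slicing argument where each additional equation either drops the dimension by one and multiplies the component count by at most $d-1$, or is identically satisfied on a component. I would also need to double-check that the partition of $W^*$ into the $W^*_\la$ is genuinely by closed subvarieties each defined by $n$ equations of degree $d-1$ — this is precisely the content of the construction of $W^*$ via Birch's method, to be set up carefully in the next section, so here I would simply invoke it. Everything else is a routine summation over the at most $p^R$ values of $\la$.
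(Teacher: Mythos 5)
Your argument is correct and follows essentially the same route as the paper: decompose $W^*=\bigcup_{\la\neq 0}W^*(\la)$ over the at most $p^R$ nonzero $\la$, bound each piece by $(\mbox{degree})\cdot p^{\dim}$ where the degree is at most $(d-1)^n$ by a B\'ezout-type inequality for the $n$ defining equations of degree $d-1$, and use $\dim W^*(\la)\leq\dim W^*\leq (d-1)n-K$ via the diagonal embedding. The ``inductive slicing argument'' you sketch for the point count is exactly the paper's Lemma 5 (B\'ezout via Hartshorne Thm.~7.7) combined with the Ghorpade--Lachaud bound $|U|\leq \deg(U)\,p^{\dim U}$, so the constant $(d-1)^n$ comes out exactly as you anticipated.
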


Thus Proposition \ref{norm} follows save for the proofs of Lemma 2 and Lemma 3.

\section{Exponential sum estimates.}

Let $P=(P_1,\ldots,P_R):\F_p^n\to\F_p^R$ be a polynomial map, $P_i$ being a homogeneous polynomial of degree $d$, written in the symmetric form
\eq P_i(x)=\sum_{1\leq j_1,\ldots,j_d\leq n} a^i_{j_1\ldots j_d}\,x_{j_1}\ldots x_{j_d},\ \ \ \ \ x=(x_1,\ldots,x_n)\ee
where $a^i_{j_1\ldots j_d}=a^i_{\pi(j_1)\ldots \pi(j_d)}$ for any permutation $\pi:\{1,\ldots,d\}\to \{1,\ldots,d\}$. Note that this is possible as $(d!,p)=1$. For $h\in\F_p^n$ define the differencing operator
\eq D_h P_i(x)=P_i(x+h)-P_i(x),\ee
and note that $deg\,(D_h P)=deg\,(P)-1$. After applying the differencing operators $d-1$ times one obtains a linear function of the form
\eq D_{h^1}\ldots D_{h^{d-1}} P_i(x)=\sum_{j=1}^n \Phi^i_j (h^1,\ldots,h^{d-1}) x_j,\ee
where $\Phi^i_j(h^1,\ldots,h^{d-1})$ is the multilinear form
\eq \Phi^i_j(h^1,\ldots,h^{d-1})= d!\,\sum_{1\leq j_1,\ldots,j_d\leq n} a^i_{j_1\ldots j_d,j}\ h^1_{j_1}\ldots h^{d-1}_{j_{d-1}},\ee
for any $(d-1)$-tuple of vectors $(h^1,\ldots,h^{d-1})\in\F_p^{(d-1)n}$. Note that on the diagonal
\eq \Phi^i_j(h,\ldots,h)= (d-1)!\ \partial_{x_j}P_i(h)\ee

\medskip

\noindent Define the set $W^*$ associated to the polynomial map $P$ by
\eq W^*=\{(h^1,\ldots,h^{d-1})\in\F_p^{(d-1)n}:\ rank\ (\Phi(h^1,\ldots,h^{d-1}))\,<\,R\},\ee
where $\Phi(h^1,\ldots,h^{d-1})$ is the $R\times n$ matrix with entries $\Phi^i_j(h^1,\ldots,h^{d-1})$ for $1\leq i\leq R$, $1\leq j\leq n$.

\medskip

\begin{proof}[\underline{Proof of Lemma 2}.] Using the definition of $U^d$ norm:

\eq
||e(\alpha\cdot P)||_{U^d}=\mathbb{E}(\Delta_{h_1,...,h_d}e(\alpha\cdot
P(x):\ x,h_1,...,h_d\in \mathbb{F}_p^n)\ee
\[\hspace{.81in} = \mathbb{E}(e(\alpha
\cdot D_{h_1,...,h_d}P(x)):x,h_1,...,h_d\in \mathbb{F}_p^n).\]

\medskip

\noindent From (2.2) (2.3) and the definition of the matrix $\Phi(h^1,\ldots,h^{d-1})$ it is clear that
\eq \Delta_{h_1,...,h_d}e(\alpha\cdot P(x))=e(\Phi^T(h^1,\ldots,h^{d-1})\al\cdot h^d)\ee
where $\Phi^T$ is the transpose of the matrix $\Phi$ and "$\cdot$" is the dot product. If $rank\,(\Phi(h^1,\ldots,h^{d-1}))=R$ then $\Phi^T(h^1,\ldots,h^{d-1})\,\al\neq 0$ hence summing (2.8) in the $h^d$ variable vanishes. Thus only the tuples $(h^1,\ldots,h^{d-1})\in W^*$ contribute to $||e(\alpha\cdot P)||_{U^d}$ and Lemma 2 follows.
\end{proof}

\medskip

If $\ rank\ (\Phi(h^1,\ldots,h^{d-1}))\,<\,R$ then its rows $\Phi^1,\ldots,\Phi^R$ are linearly dependent, thus one may write
\[W^*=\bigcup_{(\la_1,\ldots,\la_R)\neq 0} W^*(\la_1,\ldots,\la_R),\] where for $\la=(\la_1,\ldots,\la_R)\neq 0$
\eq W^*(\la)=\{(h^1,\ldots,h^{d-1})\in\F_p^{(d-1)n}:\ \la_1\Phi^1(h^1,\ldots,h^{d-1})+\ldots +\la_R \Phi^R(h^1,\ldots,h^{d-1})=0\}.\ee

Note that $W^*(\la)$ is a homogeneous algebraic set defined by $n$ equations of degree $d-1$. To estimate the size these sets we need the following basic facts from algebraic geometry.

\begin{lemma} \cite{GL} For a homogeneous (affine) algebraic set $U\subs\F_p^m$ of degree $r$ and dimension $s$ one has that
\eq |U|\leq r\,p^s\ee
\end{lemma}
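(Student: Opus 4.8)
The plan is to obtain the estimate by induction on $s=\dim U$, reducing a general homogeneous algebraic set to a count of fibres of a suitable linear projection onto $\F_p^s$; homogeneity itself will not be essential (it merely allows, via coning, an equivalent reformulation as a bound $|X(\F_p)|\le r\cdot\#\PP^{s-1}(\F_p)$ for the projective variety $X=\PP(U)$), so I will argue for an arbitrary affine $U\subs\F_p^m$. Throughout, dimension and degree are taken over $\bF_p$, the degree of a reducible set is the sum of the degrees of its irreducible components, and consequently $|U|\le\sum_i|C_i|$, so that everything in sight is additive over components. Splitting $U=U_{\mathrm{top}}\cup U_{\mathrm{low}}$ into its $s$-dimensional components and the rest, the set $U_{\mathrm{low}}$ (dimension $\le s-1$, degree $\le r$) is handled by the inductive hypothesis, so we may assume $U$ equidimensional of dimension $s$; the case $s=0$ is clear, as then $|U|\le r$.

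For the inductive step, choose an $\F_p$-linear change of coordinates so that the projection $\pi:\F_p^m\to\F_p^s$ onto the first $s$ coordinates restricts to a dominant, generically finite morphism on every component of $U$. Its generic fibre is a linear section $U\cap L$ by an affine $(m-s)$-plane $L$, which is zero-dimensional and hence has at most $\deg U=r$ points by iterated B\'ezout for hyperplane sections; likewise every finite fibre of $\pi|_U$ has at most $r$ points. Let $E\subs\F_p^s$ be the closed locus over which the fibre of $\pi|_U$ has positive dimension; since $\pi|_U$ is dominant and $U$ is equidimensional of dimension $s$ one has $\dim E\le s-1$, the set $U':=U\cap\pi^{-1}(E)$ is a closed subset of $U$ of dimension $\le s-1$, and over $\F_p^s\setminus E$ each fibre contributes at most $r$ rational points. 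Hence
\[|U|\ \le\ r\bigl(p^s-|E|\bigr)\ +\ |U'| ,\]
and it remains to bound $|U'|$ by the inductive hypothesis in dimension $\le s-1$, for which one needs control on $\deg U'$.

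The crux — and the place where I expect the real work to lie — is the degree bookkeeping needed to push this through with the \emph{sharp} constant $r$. One must estimate $\deg U'$ (and the degrees of the components of the positive-dimensional fibres) in terms of $r=\deg U$ via a refined B\'ezout inequality for linear projections, of the type $\deg(\pi|_U)\cdot\deg\overline{\pi(U)}+\deg U'\le\deg U$, and then combine it with the displayed inequality and the bound $|E|\le(\deg E)\,p^{\dim E}$ so that the contribution of $U'$ is absorbed into $r\,p^{s}$ rather than merely into some $C(m,r,s)\,p^{s}$; a naive application overcounts. A second, more technical point is the existence of the required coordinate system — equivalently, in the projective formulation, of a hyperplane avoiding all components — defined over $\F_p$ itself when $p$ is small, which may force passing temporarily to a finite extension and descending. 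Both of these are exactly what is carried out in \cite{GL}, which is why the statement is quoted here rather than reproved.
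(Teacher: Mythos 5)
Your induction sketch does not close, and you yourself flag the missing piece without supplying it, so as a standalone argument there is a genuine gap. Concretely: from your displayed inequality $|U|\le r(p^s-|E|)+|U'|$, obtaining the asserted bound $r\,p^s$ requires $|U'|\le r\,|E|$, but the inductive hypothesis only yields $|U'|\le \deg(U')\,p^{s-1}$, and $|E|$ (the set of $\F_p$-points of the bad locus) can be far smaller than $p^{s-1}$; so even granting the refined B\'ezout inequality you postulate, the bookkeeping as set up loses the sharp constant rather than absorbing $U'$ into $r\,p^s$. The second issue you raise --- existence of an $\F_p$-rational linear projection that is generically finite on every component when $p$ is small --- is likewise asserted, not established. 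Deferring ``exactly these two points'' to \cite{GL} is not a reduction to a cited statement; it is an admission that the proof is not given.

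The paper takes a different, and much shorter, route, and it hinges on precisely the hypothesis you discard as inessential: homogeneity. Since $U$ is a cone, it is determined by its image $U^0$ in the $(m-1)$-dimensional projective space, which has the same degree $r$ and (projective) dimension $s-1$, and $|U|=(p-1)|U^0|+1$. The paper then quotes a specific result, Prop.~12.1 of \cite{GL}, giving $|U^0|\le r\,\pi_s(\F_p)$ with $\pi_s(\F_p)=(p^s-1)/(p-1)$, whence $|U|\le r(p^s-1)+1\le r\,p^s$. Thus the paper's argument is complete modulo one precisely cited projective point-count, whereas your approach tries to reprove that count by a projection/fibration induction whose hard steps (sharp degree accounting over the positive-dimensional-fibre locus, and rationality of the projection) are exactly what is left out. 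If you want to keep your strategy, you would need to state and use the refined B\'ezout inequality in a form strong enough to bound the rational points over $E$ fibrewise (not just $\deg U'$), or else simply do what the paper does and let the cone structure transfer the projective bound to the affine one.
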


The degree of the set $U$ is defined as the degree of its image $U^0$ in the $m-1$-dimensional projective space above $\F_p$. For projective algebraic sets it is shown in \cite{GL}, Prop. 12.1, that $|U^0|\leq r\,\pi_s(\F_p)$ where $\pi_s(\F_p)=(p^s-1)/(p-1)$ is the size of the $s-1$-dimensional projective space. This implies (2.10) as $|U|= (p-1)|U^0|+1$.

\begin{lemma} If a homogeneous algebraic set $U$ is defined by $n$ equations of degrees $d_1,\ldots,d_n$ then its degree is bounded by
\eq deg\,(U)\leq d_1d_2\ldots d_n\ee
\end{lemma}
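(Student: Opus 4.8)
The statement is a form of B\'ezout's inequality, and I would prove it by induction on the number $n$ of defining equations, cutting the ambient space down by one hypersurface at a time. Write $U = Z(f_1)\cap\cdots\cap Z(f_n)$ with $f_i$ homogeneous of degree $d_i$; if some $f_i$ vanishes identically it imposes no condition and may be dropped, which only improves the bound since $d_i\geq 1$, so I may assume each $f_i\neq 0$. Set $U_k = Z(f_1)\cap\cdots\cap Z(f_k)$, so that $U_0$ is the whole space $\F_p^m$, of degree $1$, and $U = U_n$. Throughout, the degree of a (possibly reducible) homogeneous algebraic set means the sum of the degrees of its irreducible components, which is the convention under which the degree-versus-size estimate of \cite{GL} quoted above is stated. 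It then suffices to establish the one-step bound $\deg(U_k)\leq d_k\,\deg(U_{k-1})$ for each $k$; iterating from $k=1$ to $n$ yields $\deg(U)\leq d_1 d_2\cdots d_n$.

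For the one-step bound I would argue over the irreducible components. Put $V = U_{k-1}$ with irreducible components $V_1,\ldots,V_t$, so $\deg V = \sum_j \deg V_j$, and let $H = Z(f_k)$, a hypersurface of degree $e := d_k$. Fix a component $V_j$. If $V_j\subseteq H$, then $V_j$ is a union of components of $V\cap H$, contributing $\deg V_j \leq e\,\deg V_j$ to $\deg(V\cap H)$. If $V_j\not\subseteq H$, then $f_k$ restricts to a nonzero function on the irreducible variety $V_j$, so by Krull's principal ideal theorem every component of $V_j\cap H$ has dimension exactly $\dim V_j - 1$, and the classical bound for a hypersurface section of an irreducible projective variety gives $\deg(V_j\cap H)\leq e\,\deg V_j$. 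In either case the part of $V\cap H$ lying in $V_j$ has degree at most $e\,\deg V_j$; summing over $j$ gives $\deg(V\cap H)\leq e\sum_j \deg V_j = e\,\deg V$, as needed.

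Since $U$ and all the $f_i$ are homogeneous, the whole argument may be run on the projectivizations in $\mathbb{P}^{m-1}$ over the algebraic closure of $\F_p$, so the passage between affine cones and their projective images (already needed for the $|U|\leq r p^s$ estimate above) presents no difficulty. The single nontrivial ingredient is the hypersurface-section estimate $\deg(W\cap H)\leq \deg(H)\,\deg(W)$ for irreducible $W$ not contained in $H$; this is classical — it follows, for instance, by intersecting $W$ with a generic linear subspace of complementary dimension to reduce to B\'ezout's theorem for plane curves, or directly from Fulton's refined B\'ezout theorem — and everything else is routine bookkeeping over the components. Applied with $m=(d-1)n$ and $d_1=\cdots=d_n=d-1$, the lemma gives $\deg W^*(\la)\leq (d-1)^n$ for each $\la\neq 0$, which combined with the estimate of \cite{GL} produces the bound on $|W^*|$ used in Lemma \ref{Wbound}.
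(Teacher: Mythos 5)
Your proof is correct and follows essentially the same route as the paper: induction on the number of defining equations, intersecting with one hypersurface at a time, handling the components contained in $H$ trivially, and invoking the classical hypersurface-section bound $\deg(Y\cap H)\leq \deg(Y)\deg(H)$ for irreducible $Y\not\subseteq H$ (which the paper takes from Theorem B, i.e.\ Hartshorne, Thm.\ 7.7), then summing over irreducible components. The extra appeals to Krull's principal ideal theorem and Fulton's refined B\'ezout theorem are not needed but do no harm.
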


The degree of a (projective) algebraic set is defined as the sum of degrees of its irreducible components, and for a projective algebraic variety it can be defined geometrically as the number of intersection points with a generic subspace of complementary dimension or algebraically, see \cite{harts} Prop. 7.6 and the definition preceding it. Note that the degree of a hypersurface is the degree of its defining polynomial.
Lemma 5 may be viewed as a generalization of Bezout's theorem on the number of intersection of plane algebraic curves, written as an inequality ignoring the multiplicities of the intersection points. It follows easily from the following basic inequality
which is an immediate corollary of Thm. 7.7 in \cite{harts}.

\begin{thmB} Let $Y$ be a (projective) variety of dimension at least 1 and let $H$ be a hypersurface not containing $Y$. Let $Z_1,\ldots,Z_s$ be the irreducible components of the intersection $Y\cap H$. Then
\eq \sum_{i=1}^s deg\,(Z_i)\leq deg\,(Y)\,deg\,(H)\ee
\end{thmB}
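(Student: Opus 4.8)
The plan is to recognize Theorem B as nothing more than the inequality obtained from the classical refined B\'ezout theorem by discarding intersection multiplicities. Precisely, Theorem 7.7 in \cite{harts} asserts that, for $Y\subseteq\mathbb{P}^n$ a variety of dimension $\geq 1$ and $H$ a hypersurface not containing $Y$, the intersection $Y\cap H$ is nonempty, every irreducible component $Z_j$ has dimension exactly $\dim Y-1$, and moreover
\[
\sum_{j=1}^s i(Y,H;Z_j)\,\deg Z_j \;=\; \deg Y\cdot\deg H ,
\]
where $i(Y,H;Z_j)\in\mathbb{Z}_{\geq 1}$ is the local intersection multiplicity of $Y$ and $H$ along $Z_j$. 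So the first step is simply to invoke this statement, which is available over the algebraically closed field $\bar{\mathbb{F}}_p$ we are working with.

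The second step is the trivial observation that each $\deg Z_j$ is a positive integer and each multiplicity satisfies $i(Y,H;Z_j)\geq 1$; hence
\[
\sum_{j=1}^s \deg Z_j \;\leq\; \sum_{j=1}^s i(Y,H;Z_j)\,\deg Z_j \;=\; \deg Y\cdot\deg H ,
\]
which is exactly the asserted inequality. There is essentially no obstacle here: the sum in the cited theorem already ranges over all irreducible components of $Y\cap H$, each occurring once with a multiplicity $\geq 1$, so dropping the multiplicities is legitimate and loses only the exact value.

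Should one prefer a self-contained argument avoiding the intersection-multiplicity machinery, I would instead cut down to dimension one. Choosing a generic linear subspace $L\subseteq\mathbb{P}^n$ of codimension $\dim Y-1$, one has for generic $L$ that $L\cap Y$ is a curve of degree $\deg Y$, that $L\cap H$ is a hypersurface of degree $\deg H$ inside $L$, and that $|L\cap Z_j|=\deg Z_j$ for each $j$; since $L\cap(Y\cap H)=\bigcup_j(L\cap Z_j)$ it then suffices to bound $|C\cap H'|$ for a curve $C$ of degree $\delta$ and a hypersurface $H'=V(f)$ of degree $e$ not containing $C$. Passing to the normalization $\widetilde C$, the restriction $f|_{\widetilde C}$ is a nonzero section of $\mathcal{O}(e)|_{\widetilde C}$, a line bundle of degree $e\delta$, so it has exactly $e\delta$ zeros counted with multiplicity, and each point of $C\cap H'$ lies below at least one such zero, giving $|C\cap H'|\leq e\delta$. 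The main obstacle along this route is purely the genericity bookkeeping: one must justify that a generic $L$ of the stated codimension meets each $Z_j$ transversally in $\deg Z_j$ reduced points and preserves the degree of $Y$, and one uses that a variety of positive dimension in projective space meets every hypersurface. Both facts are standard but require the ground field to be infinite, which holds for $\bar{\mathbb{F}}_p$.
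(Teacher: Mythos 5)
Your main argument is exactly what the paper intends: Theorem B is stated there as an immediate corollary of Theorem 7.7 in Hartshorne, obtained by discarding the intersection multiplicities $i(Y,H;Z_j)\geq 1$ from the refined B\'ezout equality, which is precisely your first two steps. The alternative curve-section argument you sketch is a reasonable bonus but unnecessary; the proposal is correct and matches the paper's approach.
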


\begin{proof}[\underline{Proof of Lemma 5}.]
One may write (2.12) as
\eq deg\,(Y\cap H)\leq deg\,(Y)\,deg\,(H)\ee
as long as $Y\nsubseteq H$ and $dim\,Y\geq 1$. However for $Y\subs H$ or when $dim\,Y=0$ ($Y$ being a point and $deg\,(Y)=1$) inequality (2.13) holds trivially. It also extends to algebraic sets $V$ by writing them as union of their irreducible components $Y$ and using (2.13) for each $Y$ together with the fact that each irreducible component of $V\cap H$ is contained in some $Y\cap H$ for an irreducible $Y\subs V$. Then (2.11) follows immediately by induction on $n$.
\end{proof}

\begin{proof}[\underline{Proof of Lemma 3}.]
For a given $\la\in\F_p^R\backslash\{0\}$ one has that $dim\,W^*(\la)\leq dim\,W^*\leq (d-1)n-K$, and since it is defined by $n$ equations of degree $d-1$ its degree satisfies the bound $deg\,(W^*(\la)\leq (d-1)^n$. Hence by (2.10) and (2.11) one has for all $\la\in\F_p^R\backslash\{0\}$
\[|W^*(\la)|\leq (d-1)^n p^{(d-1)n-K}\] and Lemma 3 follows from the decomposition $W^*=\cup_\la W^*(\la)$.
\end{proof}

\section{Generic polynomial maps.}

We will work above the algebraically closed field of characteristic $p$, denoted by $\bar{\F}_p$. Though it is best to view the arguments below by identifying homogeneous algebraic sets with their image in the projective space, we will keep to the affine settings.
Let $\mathcal{P}(n,d)$ be the space of homogeneous polynomials $P:\bF_p^n\to\bF_p$ of degree $d$, which is the $N={n+d-1 \choose d}$-dimensional linear space over $\bF_p$. The critical locus $\mathcal{C}\subs \mathcal{P}(n,d)\times \bF_p^n$ as the set of pairs $(P,x)$ such that $x\in S_P^*$, the singular variety of $S_P=P^{-1}(0)$, that is $\partial_{x_1}P(x)=\ldots =\partial_{x_n}P(x)=0$. Its image under the natural projection $\pi:\mathcal{P}(n,d)\times \bF_p^n\to \mathcal{P}(n,d)$ is the discriminant $\mathcal{D}=\mathcal{D} (n,d)$, which is the locus of polynomials $P$ so that $S_P$ is singular. It is known that $\mathcal{C}$ is a connected smooth variety of dimension equal to that of $D$, which is a hypersurface in $\mathcal{P}(n,d)$, see \cite{Varl}, Cor.2 and Cor.5.

For given $K\geq 1$ let $\mathcal{D}_K=\mathcal{D}_K(n,d)$ be the locus of polynomials $P$ such that $dim\,S_P^*\geq K$, and let $\mathcal{C}_K=\pi^{-1} (D_K)$ be its pre-image. The set $D_K$ is a Zarisky closed set, moreover the pre-image $\pi^{-1}(P)=S_P^*$ of every point $P\in\mathcal{D}_K$ has dimension at least $K$ (at least dimension $K-1$ in the projective settings). This implies that
\eq dim\ \mathcal{D}_K \leq dim\ \mathcal{C}-K+1=N-K.\ee
Thus $\mathcal{D}_K$ has codimension at least $K$.

\begin{proof}[\underline{Proof of Proposition \ref{generic}}.]

For given $R$ let $\mathcal{P}(n,d)^R$ the space of homogeneous polynomial maps $P=(P_1,\ldots,P_R):\bF_p^n\to\bF_p^R$ of degree $d$. If $x\in S_P^*$ then by definition (1.2), these exists a nonzero $\mu=(\mu_1,\ldots,\mu_R)\in\bF_p^R$, such that $x\in S_{P_\mu}^*$, where $P_\mu=\mu_1 P_1+\ldots +\mu_R P_R$. Note that $S_{P_\mu}=S_{P_{\mu'}}$ if $\mu=\la\mu'$ for a scalar $\la\neq 0$. Thus

\eq S_P^*\subs \bigcup_{\mu\in \Pi_p^{R-1}} S_{P_\mu}^*,\ee
where  $\Pi_p^{R-1}$ is the $R-1$ dimensional projective space above $\bar{\F}_p$, considered as an algebraic set in $\bF_p^R$. This implies that for a given $L\geq 2R$, if $dim\,S_P^*\geq L$ then there must exist a $\mu\neq 0$ such that $dim\,S_{P_\mu}\geq L-R+1$.

Let $\Phi:\Pi_p^{R-1}\times \mathcal{P}(n,d)^R\to \mathcal{P}(n,d)$ be the map defined by $\Phi (\mu,P)=P_\mu$, and let $\pi: \Pi_p^{R-1}\times \mathcal{P}(n,d)^R\to \mathcal{P}(n,d)^R$ be the natural projection. Then the locus of polynomial maps $P$ for which $dim\,S_P^*\geq L$ is contained in
\eq \{P\in\mathcal{P}(n,d)^R:\ dim\,S_P^*\geq L\}\subs \pi(\Phi^{-1}D_K)\ee
with $K=L-R+1$.
The tangent map $d\Phi_{(\mu,P)}$ is onto at every point $(\mu,P)$ where $\mu\neq 0$, thus the codimension of the algebraic set $\Phi^{-1}D_K$ is at least $K$. The projection $\pi$ cannot increase the dimension, hence the codimension of $\pi(\Phi^{-1}D_K\subs \mathcal{P}(n,d)^R$ is at least $K-R+1$. If $L\geq 2R-1$ then the set of polynomial maps $P$ for which $dim\ S_P^*\geq L$ has codimension at least $L-2R+2\geq 1$, thus is contained in a proper algebraic set. This proves Proposition \ref{generic}.
\end{proof}

\section{Linear subspaces in homogeneous varieties.}

We will show that a homogeneous variety $S=P^{-1}(0)$ contains a large linear subspace $M$, as an easy corollary of the following result due to Chevalley and Warning (\cite{FF}, Thm. 6.11)

\begin{thmC} Let $Q_1,\ldots,Q_t:\F_p^n\to\F_p$ be polynomials of degree $d_1,\ldots,d_t$ such that $D=d_1+\ldots +d_t<n$, and $Q_i(0)=0$ for all $i$. If $S_Q$ is the common zero set of the polynomials $Q_i$ then
\eq |S_Q|\geq p^{n-D}\ee
\end{thmC}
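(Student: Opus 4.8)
The plan is to prove Theorem C (Chevalley--Warning) via the standard character-sum argument, counting the zero set weighted by a delta-function expressed through additive characters. First I would introduce, for each polynomial $Q_i$, an auxiliary slack and pass to the quantity $|S_Q|$ by writing
\[
|S_Q|=\sum_{x\in\F_p^n}\prod_{i=1}^t\bigl(1-Q_i(x)^{p-1}\bigr),
\]
using that over $\F_p$ one has $a^{p-1}=1$ for $a\neq 0$ and $a^{p-1}=0$ for $a=0$, so each factor $1-Q_i(x)^{p-1}$ is the indicator of $\{Q_i(x)=0\}$. Expanding the product, it suffices to show that for every monomial $Q_{i_1}^{p-1}\cdots Q_{i_k}^{p-1}$ arising, the sum $\sum_{x\in\F_p^n}Q_{i_1}(x)^{p-1}\cdots Q_{i_k}(x)^{p-1}$ is divisible by $p$, and more precisely to track the exact power of $p$ dividing $|S_Q|$ and separately to get the stated lower bound $p^{n-D}$.

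The key step is the elementary lemma that $\sum_{x\in\F_p}x^{m}=0$ in $\F_p$ whenever $m$ is not a positive multiple of $p-1$ (and equals $-1$ when it is), which by multiplicativity over the $n$ coordinates forces $\sum_{x\in\F_p^n}x^{\alpha}=0$ unless every exponent $\alpha_j$ is a positive multiple of $p-1$. A polynomial all of whose appearing monomials have this property must have total degree at least $n(p-1)$. Since $\deg\bigl(Q_{i_1}^{p-1}\cdots Q_{i_k}^{p-1}\bigr)\le (p-1)(d_{i_1}+\cdots+d_{i_k})\le (p-1)D<(p-1)n$, the sum over $x\in\F_p^n$ of each such product vanishes in $\F_p$; hence $|S_Q|\equiv 0\pmod p$. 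This divisibility already gives nonvanishing: if $S_Q$ were empty we would need $0\equiv 0$, which is consistent, so the pure Chevalley--Warning congruence alone does not yield $|S_Q|\ge p^{n-D}$ — one needs the sharper Ax--Katz type refinement.

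For the quantitative bound $|S_Q|\ge p^{n-D}$ I would instead use the Warning-style pigeonhole/exponential-sum count directly: write
\[
|S_Q|=p^{-t}\sum_{y\in\F_p^t}\ \sum_{x\in\F_p^n}e\!\left(\sum_{i=1}^t y_i Q_i(x)\right),
\]
isolate the $y=0$ term giving $p^{n-t}$, and bound the remaining terms. The sharper route is to invoke the Ax--Katz theorem, which states exactly that $p^{\lceil (n-D)/\max d_i\rceil}$ — and in fact $p^{n-D}$ in the form quoted here from \cite{FF}, Thm.~6.11 — divides $|S_Q|$; combined with $S_Q\neq\emptyset$ (which follows because $0\in S_Q$ by the hypothesis $Q_i(0)=0$) and $|S_Q|\ge 1$, divisibility by $p^{n-D}$ immediately forces $|S_Q|\ge p^{n-D}$.

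The main obstacle is that the bare Chevalley--Warning congruence $p\mid |S_Q|$ is not strong enough for the stated inequality; the real content is the higher $p$-divisibility (Ax--Katz), so the proof should either cite \cite{FF}, Thm.~6.11 for the precise divisibility $p^{n-D}\mid |S_Q|$ and then combine with $0\in S_Q$, or reproduce Warning's original second theorem. I would present it as: \emph{the hypothesis $Q_i(0)=0$ ensures $0\in S_Q$ so $|S_Q|\ge 1$; by the cited theorem $|S_Q|$ is divisible by $p^{n-D}$; hence $|S_Q|\ge p^{n-D}$.} The only genuine work is the $p$-divisibility statement, which follows from the exponent-counting lemma above applied with care to the binomial-type expansions of $Q_i(x)^{p-1}$, and I would expect the routine but slightly delicate bookkeeping there — making sure the total degree bound $(p-1)D$ is compared against $n(p-1)$ correctly across all sub-products — to be the one place demanding attention.
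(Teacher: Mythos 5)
The paper gives no proof of Theorem~C at all: it is quoted verbatim from \cite{FF}, Thm.~6.11 (Warning's second theorem), so a bare citation would have been on the same footing as the paper. Your attempt to actually derive the inequality, however, rests on a false step. The pivot of your quantitative argument is the claim that $p^{n-D}$ divides $|S_Q|$, attributed to Ax--Katz ``in the form quoted here from \cite{FF}, Thm.~6.11''. This is wrong on both counts: Ax--Katz gives divisibility only by $p^{\lceil (n-D)/\max_i d_i\rceil}$, which is far weaker than $p^{n-D}$ as soon as some $d_i>1$; and Thm.~6.11 of \cite{FF} is not a divisibility statement but exactly the inequality you are trying to prove, so citing it ``for the divisibility'' is simultaneously a misquotation and circular. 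A concrete counterexample to $p^{n-D}\mid |S_Q|$: take $p=3$, $n=4$, $t=1$, $Q=x_1^2+x_2^2+x_3^2+x_4^2$. Then $D=2<4$ and $Q(0)=0$, and a direct count gives $|S_Q|=33$, which is not divisible by $p^{n-D}=9$ (though $33\ge 9$, as Theorem~C asserts). So the inference ``$|S_Q|\ge 1$ and $p^{n-D}\mid |S_Q|$, hence $|S_Q|\ge p^{n-D}$'' cannot be salvaged.

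The first half of your write-up --- the identity $|S_Q|=\sum_{x}\prod_i\bigl(1-Q_i(x)^{p-1}\bigr)$ in $\F_p$ and the exponent-counting lemma giving $|S_Q|\equiv 0\pmod p$ --- is correct, and you rightly observe that this congruence alone is too weak. But the repair you propose does not exist; the missing ingredient is a genuinely different lemma. The standard proof of Warning's second theorem lower-bounds the \emph{support} of a reduced polynomial: if $f$ is a reduced polynomial (each variable appearing to degree at most $p-1$) of total degree $e$ that is not identically zero as a function on $\F_p^n$, then $f$ is nonzero at no fewer than $(p-r)\,p^{n-k-1}$ points, where $e=k(p-1)+r$ with $0\le r<p-1$. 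Applying this to the reduction of $\prod_i\bigl(1-Q_i^{p-1}\bigr)$, which has degree at most $D(p-1)$ and is not identically zero because $Q_i(0)=0$ forces $0\in S_Q$, yields $|S_Q|\ge p^{n-D}$. (Your exponential-sum display with the $y=0$ term isolated also leads nowhere in this generality, since there is no usable bound on the nonzero-$y$ terms without further hypotheses.) To complete the proof you must either prove that support lemma or simply cite \cite{FF} as the paper does.
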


\begin{prop} Let $S=P^{-1}(0)$, where $P:\F_p^n\to F_p^R$ a homogeneous polynomial map of degree $d<p$. Then
$S$ contains a linear subspace $M$ such that
\eq dim\,M\geq c_d (n/R)^{\frac{1}{d}}\ee
with a constant $c_d>0$ depending only on $d$.
\end{prop}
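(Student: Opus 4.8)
The plan is to build the linear subspace $M$ one dimension at a time, using Theorem C to guarantee that at each step there is still a nonzero point of $S$ avoiding the span of the points already chosen; the arithmetic of how many steps this can be sustained gives the exponent $1/d$. Concretely, suppose we have already found linearly independent $v_1,\ldots,v_k\in\F_p^n$ spanning a subspace $V_k$ on which $P$ vanishes identically — equivalently, $P(t_1 v_1+\cdots+t_k v_k)=0$ for all $t\in\F_p^k$. To extend, we want a vector $v_{k+1}\notin V_k$ such that $P$ vanishes on all of $V_{k+1}=\mathrm{span}(v_1,\ldots,v_{k+1})$. Writing a general point of $V_{k+1}$ as $w + s\,v_{k+1}$ with $w\in V_k$, $s\in\F_p$, and expanding $P_i(w+s v_{k+1})$ in powers of $s$, the coefficient of $s^j$ is a form of degree $d-j$ in $v_{k+1}$ (with coefficients that are degree-$j$ polynomials in $w$, i.e.\ in the $k$ coordinates of $w$); the $s^0$ term vanishes since $w\in V_k$. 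So the condition ``$P$ vanishes on $V_{k+1}$'' becomes a system of polynomial equations in the $n$ coordinates of $v_{k+1}$, of degrees $1,2,\ldots,d$, one such block for each of the $R$ components $P_i$ and for each monomial $w^\beta$ in the $k$ coordinates of $w$ that can appear.

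The key bookkeeping step: the number of equations of degree $d-j$ (for each fixed $i$) is the number of monomials of degree $j$ in $k$ variables, which is $\binom{k+j-1}{j}\le (k+1)^j$ say (here I use $d<p$ so that no derivatives vanish identically and the Taylor-type expansion in $s$ is valid and nondegenerate). Hence the total ``degree sum'' $D$ of the defining system for $v_{k+1}$ is at most
\[
D \;\le\; R\sum_{j=1}^{d} (d-j)\binom{k+j-1}{j} \;\le\; R\,d\,(k+1)^{d-1},
\]
possibly after absorbing constants depending only on $d$. By Theorem C, as long as $D<n$ this common zero set has at least $p^{n-D}\ge p$ points, in particular a nonzero point; and one checks such a point can be taken outside $V_k$ (the only points of $V_k$ satisfying the system form a lower-dimensional set once $n$ is large, or one simply notes $|S_Q|\ge p^{n-D}$ exceeds $|V_k|=p^k$ when $D<n-k$, which holds in the relevant range). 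Thus the extension succeeds whenever $c_d\,R\,k^{d-1}<n$, i.e.\ for all $k$ up to roughly $(n/(c_d R))^{1/(d-1)}$.

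Carrying the induction to the largest such $k$ yields $\dim M \gtrsim_d (n/R)^{1/(d-1)}$, which is in fact slightly stronger than the claimed $(n/R)^{1/d}$; so the stated bound follows a fortiori (and if one prefers the cruder count $\binom{k+j-1}{j}\le k^{d-1}$ uniformly in $j\le d$, the total degree sum is $\le R d^2 k^{d-1}$ and the same conclusion holds). I expect the main obstacle — really the only subtle point — to be verifying that the degree-$(d-j)$ equations in $v_{k+1}$ are not identically zero and that one can genuinely choose $v_{k+1}\notin V_k$: the first is where $d<p$ is used (so that, e.g., the pure $s^d$-coefficient is $P_i(v_{k+1})$ itself and lower coefficients are honest iterated mixed partials, none identically vanishing), and the second is a routine counting argument comparing $p^{n-D}$ with $p^k$ within the admissible range of $k$. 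Everything else is the elementary estimate on numbers of monomials together with a direct application of Theorem C.
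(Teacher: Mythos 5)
Your proposal is correct and is essentially the paper's own argument: the paper likewise takes a maximal subspace $M\subseteq S$ and applies Theorem C to the system of conditions $Q_i(h,\ldots,h,h_{i_{k+1}},\ldots,h_{i_d})=0$ on the new vector $h$ (which, since $d<p$, is exactly your system of Taylor coefficients of $P_i(w+sv_{k+1})$ in $s$), then compares $p^{n-D}$ with $|M|=p^m$ to force an extension. Your only slip is cosmetic (the $s^j$-coefficient has degree $j$, not $d-j$, in $v_{k+1}$, though the resulting equation system is the same), and your more careful monomial count actually gives $D\leq C_dRk^{d-1}$ and hence the slightly stronger exponent $1/(d-1)$, whereas the paper settles for the cruder bound $D\leq C_dRm^{d}$ and the stated exponent $1/d$.
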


\begin{proof} Let $M$ be a maximal subspace, such that $M\subs S$. Let $h_1,\ldots,h_m$ be a basis of $M$. One may write $P_i(x)=Q_i(x,\ldots,x)$ where $Q_i(x_1,\ldots,x_d)$ is a symmetric multi-linear form, as in (2.1).
If $h$ is such that $Q_i(h,\ldots,h,h_{i_{k+1}},\ldots,h_{i_d})=0$ for all $1\leq k\leq d$, $1\leq i\leq R$, and $1\leq i_{k+1}\leq\ldots,\leq i_d\leq m$, then $M'=M+\F_p h\subs S$ as well. For fixed $k$ this gives $R\,{m \choose k}$ homogeneous equations of degree $k$. The sum of degrees $D$ of all these equations is bounded by
\eq D\leq C_d R\,m^d\ee
By the Chevalley-Warning's Theorem, the number of such $h$ is at least $p^{n-D}$. If $m<c_d\,(m/R)^{\frac{1}{d}}$, then $p^{n-D}>p^m=|M|$. Thus one may choose $h\notin M$ such that $M+\F_p h\subs S$ contradicting our assumption. This proves the Proposition.
\end{proof}

\begin{cor} Let $A\subs\F_p^n$ of density $\de>0$, and let $S=P^{-1}(0)$, where $P:\F_p^n\to F_p^R$ a homogeneous polynomial map of degree $d<p$. If
\eq R<c(\de,l,d,p)\,n\ee
then $A$ contains an arithmetic progression $\{x,x+y,\ldots,x+(l-1)y$\} with common difference $y\in S\backslash\{0\}$.
\end{cor}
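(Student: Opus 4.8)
**The plan is to combine Proposition~4 with the quantitative finite-field Szemer\'edi theorem (Theorem~A), exactly as in the pattern set up by Lemma~\ref{vN1} and formula~(1.8), but now restricting the common difference to lie inside a linear subspace $M\subs S$.**

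First I would invoke Proposition~4: since $P$ is homogeneous of degree $d<p$ and $R<c(\de,l,d,p)\,n$, the variety $S=P^{-1}(0)$ contains a linear subspace $M$ with $\dim M\geq c_d(n/R)^{1/d}$. If the constant $c(\de,l,d,p)$ in~(4.5) is chosen small enough, this forces $m:=\dim M$ to be at least some prescribed value $m_0=m_0(\de,l,p)$; concretely, $n/R > (m_0/c_d)^d$ suffices. So it is enough to find, inside $A$, an $l$-term progression with nonzero common difference $y\in M$. Next, restrict attention to the subspace $M\cong\F_p^m$. The density of $A\cap M$ need not be $\geq\de$ for a \emph{fixed} translate of $M$, but by averaging over the $p^{n-m}$ cosets $z+M$ of $M$ in $\F_p^n$ we have $\EE_{z}\bigl(\tfrac{|A\cap(z+M)|}{|M|}\bigr)=|A|/p^n\geq\de$, so there is at least one coset $z+M$ on which $A$ has relative density $\geq\de$. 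Translating, we may assume $A_0:=(A-z)\cap M\subs M$ has density $\geq\de$ in $M\cong\F_p^m$.

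Now apply Theorem~A on $\F_p^m$ with $f=\1_{A_0}$ and the same $l$: provided $m\geq m_0(\de,l,p)$ is large enough that the main term $c(\de,l,p)$ of Theorem~A strictly exceeds the trivial contribution $p^{-m}$ coming from the degenerate progressions $y=0$ (there are $p^m$ of them, each contributing at most $p^{-2m}$ to the normalized count, for a total of $p^{-m}$), we conclude that
\[
\EE\bigl(\1_{A_0}(x)\1_{A_0}(x+y)\cdots\1_{A_0}(x+(l-1)y):\ x,y\in\F_p^m\bigr)\geq c(\de,l,p)>p^{-m},
\]
so there must exist $x\in M$ and $y\in M\setminus\{0\}$ with $x,x+y,\ldots,x+(l-1)y$ all in $A_0$. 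Since $M\subs S$, we have $y\in S\setminus\{0\}$, and translating back by $z$ gives the desired progression in $A$ with common difference $y\in S\setminus\{0\}$. The threshold $m_0$ determines, via $n/R>(m_0/c_d)^d$, the admissible constant $c(\de,l,d,p)$ in~(4.5).

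**The main obstacle is purely bookkeeping: making sure the quantitative constants chain together correctly** — that $c(\de,l,d,p)$ in~(4.5) is small enough to guarantee $m=\dim M\geq m_0$, and that $m_0$ is in turn large enough that Theorem~A's lower bound beats the $p^{-m}$ loss from trivial progressions and that $l\leq p\leq p^m$ (so that nondegenerate progressions in $M$ genuinely consist of distinct points). None of this is deep; the only genuine inputs are Proposition~4 (hence Chevalley--Warning) for producing the subspace and Theorem~A for finding the progression inside it, and the coset-averaging step is elementary.
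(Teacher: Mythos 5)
Your proposal is correct and follows essentially the same route as the paper: invoke Proposition 4 to find a linear subspace $M\subs S$ of dimension $m\geq c_d(n/R)^{1/d}$, pass by averaging to a coset of $M$ on which $A$ has relative density at least $\de$, and apply Theorem A inside $M\cong\F_p^m$ to extract a non-trivial progression whose gap lies in $M\subs S$. Your explicit comparison of $c(\de,l,p)$ against the $p^{-m}$ contribution of the degenerate progressions is a detail the paper leaves implicit, but the argument is the same.
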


\begin{proof} Let $M$ be a maximal subspace contained in $S$. Let $M+x_i$ be a translate of $M$ such that the relative density $\de_i=|(A\cap (M+x_i)|/|M|$ of $A$ on $m+x_i$ is at least $\de$. If the dimension $m$ of $M$ is large enough: $m\geq m(\de,l,p)$ then $A\cap (M+x_i)$ contains a non-trivial arithmetic progression of length $l$, whose gap $y$ is then in $M\subs S$. By (4.2) this happens if $(n/R)^{1/d}>m(\de,l,d,p)$ for which it is enough to assume (4.4).
\end{proof}


\bigskip
\bigskip

\end{document}